\newtheorem{thm}{Theorem}[section]
\newtheorem{lem}[thm]{Lemma}
\newtheorem{defn}[thm]{Definition}
\newtheorem{cor}[thm]{Corollary}
\newtheorem{claim}[thm]{Claim}
\newcommand{\Parts}{\textrm{Parts}}
\newcommand{\len}{\textrm{len}}
\newcommand{\card}[1]{\left\lvert#1\right\rvert}
\newcommand{\Z}{\mathbb{Z}}
\newcommand{\R}{\mathbb{R}}
\newcommand{\A}{\mathcal{A}}
\newcommand{\Lang}{\textrm{Lang}}
\newcommand{\Per}{\textrm{Per}}
\newcommand{\Skel}{\textrm{Skel}}
\newcommand{\blank}{\square}
\newcommand{\isfilled}{\prec}
\newcommand{\Minv}[2]{\mathcal{M}_{#2}(#1)}
\newlength{\squarewidth}
\newlength{\onewidth}
\newlength{\zerowidth}
\newcommand{\one}[0]{
    \hspace{(\squarewidth - \onewidth)/2}
    1\hspace{(\squarewidth - \onewidth)/2}
}
\newcommand{\zero}[0]{
    \hspace{(\squarewidth - \zerowidth)/2}
    0\hspace{(\squarewidth - \zerowidth)/2}
}
\renewcommand{\sigma}{S}
\begin{document}
\title{Generalized Oxtoby subshifts and hyperfiniteness}
\author{Konrad Deka}
\address{Faculty of Mathematics and Computer Science, Jagiellonian University in Krakow, ul. {\L}ojasiewicza 6, 30-348 Krak\'{o}w, Poland}
\email{deka.konrad@gmail.com}

\author{Bo Peng}
\address{Department of Mathmatics and Statistics, McGill University. 805 Sherbrooke Street West Montreal, Quebec, Canada, H3A 2K6}
\email{bo.peng3@mail.mcgill.ca}

\maketitle
\begin{abstract}
    We show that there exists a class of symbolic subshifts which realizes all Choquet simplices as simplices of invariant measures and the conjugacy relation on that class is hyperfinite.
\end{abstract}
\section{Introduction}
In this paper, we study the relationship between the complexity of conjugacy relation of symbolic subshifts and the complexity of homeomorphism of simplices of their invariant measures.

A theorem of Downarowicz \cite{DownarowiczChoquet} states that for every compact metric Choquet simplex $C$,
there exists a Toeplitz subshift
whose simplex of invariant measures is 
affinely homeomorphic to $C$. 
A theorem of Sabok \cite{MScompleteness} states that the relation of affine homeomorphism between 
compact metric Choquet simplices is a complete orbit equivalence relation. 
Naturally, if two subshifts $X, Y$ are isomorphic, 
then the simplices of invariant measures $\Minv{X}{\sigma}$, 
$\Minv{Y}{\sigma}$ are affinely homeomorphic.
One could hope that in some restricted class of subshifts, a converse statement holds,
which could lead to a nontrivial lower bound on the complexity of the conjugacy relation of Toeplitz subshifts.

 Gao, Jackson, Seward \cite[Theorem 9.3.3]{GaoJacksonSeward} and Thomas\cite{Simon-Thomaz} proved that the conjugacy relation for minimal symbolic subshifts is not smooth but it is not known whether it is hyperfinite or not. The exact complexity of the conjugacy relation on symbolic Toeplitz subshifts is a well-known open problem \cite[Question 1.3]{ST}, however, the complexity of conjugacy relation restricted on some subclasses has been computed. Kaya \cite{Kaya_2017} showed that the conjugacy of Toeplitz systems with growing blocks is hyperfinite which generalizes a result of Sabok and Tsankov \cite{ST} on Toeplitz systems with separate holes. Those two classes consist of regular Topelitz systems which implies all such systems are uniquely ergodic.  

It turns out the subshifts constructed by Downarowicz  belong to the class of generalized Oxtoby subshifts. 
The main result of this paper is that the conjugacy relation of that class is hyperfinite.

\begin{thm}\label{Main}
    The conjugacy relation of generalized Oxtoby systems is hyperfinite.
\end{thm}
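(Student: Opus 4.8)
The plan is to present the conjugacy relation as a countable Borel equivalence relation on a standard Borel space of codes and to reduce it to an explicit tail equivalence relation, which is hyperfinite by classical facts. First I would fix a standard Borel space $\Omega$ whose points $\omega$ are the combinatorial data defining a generalized Oxtoby subshift --- a sequence of periods $p_{0}\mid p_{1}\mid p_{2}\mid\cdots$, a nested sequence of period-$p_{n}$ skeletons $\Skel_{n}(\omega)\colon\mathbb{Z}\to A_{\omega}\cup\{\blank\}$ with consistently placed filled symbols, and the branching data specifying how the blanks of $\Skel_{n}$ are completed at level $n+1$ --- arranged so that $\omega\mapsto X_{\omega}$ is Borel into the hyperspace of subshifts. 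By the Curtis--Hedlund--Lyndon theorem every conjugacy of subshifts is a sliding block code, and there are only countably many sliding block codes, so the conjugacy relation $R=\{(\omega,\omega'):X_{\omega}\cong X_{\omega'}\}$ is a countable union of Borel conditions, hence a Borel equivalence relation with countable classes. It then suffices to exhibit a hyperfinite Borel equivalence relation $F\supseteq R$ on $\Omega$, since a Borel subequivalence relation of a hyperfinite one is again hyperfinite.

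The substantive step is a rigidity statement: a conjugacy between generalized Oxtoby subshifts is \emph{eventually compatible} with their level structure. Precisely, I would show that if $\phi$ conjugates $X_{\omega}$ to $X_{\omega'}$ with window radius $r$, then there is $N=N(\omega,\omega',r)$ such that for every $n\ge N$ there is a bijection $A_{\omega}\to A_{\omega'}$ and a translation $t$ with $|t|\le r$ carrying the level-$n$ skeleton of $X_{\omega}$ onto that of $X_{\omega'}$, and that these bijections and translations stabilize as $n\to\infty$. The mechanism is that the filled positions of $\Skel_{n}$ are \emph{intrinsic} to $X_{\omega}$: a coordinate is filled at level $n$ exactly when, throughout the subshift, its symbol depends only on the phase of the period-$p_{n}$ grid, and for $p_{n}$ large compared with $r$ a radius-$r$ code can neither create nor destroy such a coordinate and can move the grid by at most $r$; nesting of the grids then forces the translations and the relabelings to stabilize, after which the branching completions of $\omega$ and $\omega'$ from level $N$ on must correspond as well. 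This is the step I expect to be the main obstacle: unlike Toeplitz subshifts, generalized Oxtoby subshifts need not be uniquely ergodic and their points need not be regular, so one must isolate the correct intrinsic, conjugacy-invariant description of the skeletons and then keep track of how the bounded translation interacts with the branching.

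Granting the rigidity statement, I would define the tail equivalence relation $F$ on $\Omega$ by $\omega\mathrel{F}\omega'$ iff, after passing to a common refinement of the two period sequences, there are $N\in\mathbb{N}$, an alphabet bijection and an integer $t$ such that the level-$n$ skeleton and branching data of $\omega'$ are obtained from those of $\omega$ by that relabeling and the translation by $t$ for all $n\ge N$. Rigidity gives $R\subseteq F$, while matched tails produce an explicit sliding block code realizing the conjugacy (translating the skeleton grid does not change the subshift, and a relabeling is visibly a conjugacy), so in fact $R=F$. Finally $F$ is hyperfinite: writing $F=\bigcup_{N,M}F_{N,M}$, where $F_{N,M}$ additionally requires $|t|\le M$ and fixes the finitely much data below level $N$, each $F_{N,M}$ is a finite Borel equivalence relation and the union is increasing. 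Hence $R$ is hyperfinite, which is Theorem~\ref{Main}. The only genuinely new ingredient beyond the Toeplitz arguments of Sabok--Tsankov \cite{ST} and Kaya \cite{Kaya_2017} is the rigidity step in the presence of branching; the remaining bookkeeping is then formally the same as in those works.
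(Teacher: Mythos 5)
Your overall plan --- parametrize the subshifts by a standard Borel space of combinatorial data, use Curtis--Hedlund--Lyndon to see the conjugacy relation is countable Borel, and then exhibit it as an increasing union (or tail) of finite Borel equivalence relations --- is indeed the strategy of the paper (both proofs, but especially the first, which shows the relation is hypersmooth and countable and then quotes the theorem that such relations are hyperfinite). So the skeleton of the argument is sound.

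The problem is the rigidity statement, which is the step you yourself flag as the main obstacle, and which as you have stated it is false. You claim that if $\phi$ conjugates $X_\omega$ to $X_{\omega'}$ with window radius $r$, then for large $n$ there is a bijection $A_\omega \to A_{\omega'}$ of \emph{alphabets} and a shift $t$ with $|t|\le r$ carrying the level-$n$ skeleton of $X_\omega$ onto that of $X_{\omega'}$, and that the branching data from level $N$ on then correspond under that alphabet bijection and translation. This is much too strong. The correct structural result for Toeplitz systems (Downarowicz--Kwiatkowski--Lacroix, Theorem \ref{thm:DKL}) says that an isomorphism sending a Toeplitz point $x$ to $y$ is eventually given by a permutation $\phi\in \mathrm{Sym}(\A^{p_t})$ of $p_t$-\emph{blocks}, not by a permutation of letters; the relation $D_{p_t}$ used throughout the paper is defined precisely via $\mathrm{Sym}(\A^{p_t})$. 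A block permutation need not act as a letter relabeling on the skeleton. Concretely, take $\A=\{0,1\}$, $p_1=4$, a generalized Oxtoby sequence $x$ with $\Skel(x,4)=\dots 01\blank\blank 01\blank\blank\dots$, and let $\phi\in\mathrm{Sym}(\{0,1\}^4)$ swap $01ab\leftrightarrow 11ab$ for each $a,b$ and fix all other $4$-blocks. Then $y:=\tilde\phi(x)$ is again a generalized Oxtoby sequence with the same period structure, and by Theorem \ref{thm:DKL} the orbit closures of $x$ and $y$ are conjugate; but $\Skel(y,4)=\dots 11\blank\blank 11\blank\blank\dots$, and no bijection of $\{0,1\}$ composed with a shift carries $\dots 01\blank\blank\dots$ to $\dots 11\blank\blank\dots$ (the blank positions force $t\equiv 0\pmod 4$, and then both $0$ and $1$ would have to map to $1$). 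So your proposed tail relation $F$ does \emph{not} contain the conjugacy relation $R$, and the inclusion $R\subseteq F$ fails.

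A second, related gap is that "the level-$n$ skeleton of $X_\omega$" is not a well-defined object: the skeleton is attached to a point (equivalently to a part of the clopen partition $\Parts(X,p_n)$, Lemma \ref{lem:parts_of_toeplitz}), and a given subshift has up to $p_n$ distinct level-$n$ skeletons, cyclically shifted. Your tail equivalence compares a single skeleton of $\omega$ with a single skeleton of $\omega'$, whereas an isomorphism need not carry the distinguished part to the distinguished part; the paper copes with this by forming the finite family $\chi(X)_{p_t}$ of suitably centered parts and comparing these families via $D_{p_t}^{\mathrm{fin}}$, and by the small/large gap Lemma \ref{lem:small_and_large_gap}, which uses the generalized Oxtoby condition in an essential way to control which parts can be matched. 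Your proposal does not engage with this at all. Once the rigidity claim is corrected to use $\mathrm{Sym}(\A^{p_t})$ rather than alphabet bijections and the parts structure is taken into account, the argument essentially becomes the paper's first proof; as written, the substantive step does not go through.
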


As a consequence, there exists a class of Toeplitz subshifts, such that on one hand every Choquet simplex can be realized as the set of invariant measures on one of those subshifts, and on the other hand, the conjugacy relation on that class is hyperfinite.

We give two proofs of Theorem \ref{Main}. The structure of the paper is the following. Section \ref{2} contains preliminaries. Section \ref{3} contains basic definition in symbolic subshifts. In Section \ref{4}, we review the result of Downarowicz on realizing Choquet simplices. Section \ref{5} contains analysis of symbolic and Toeplitz subshifts.  Section \ref{6} contains the first proof of Theorem \ref{Main}.  Section \ref{7} contains analysis of the Oxtoby property for subshifts and Section \ref{8} contains the second proof of Theorem \ref{Main}.

\section{Preliminaries} \label{2}
A \emph{topological dynamical system}, tds for short, is a pair $(X, T)$ where $X$ is a compact metric space and $T \colon X \to X$ is a continuous map.
For $x \in X$, its \emph{orbit} is $\mathcal{O}(x) := \{T^n x \colon n \ge 0\}$. 
A tds $(X, T)$ is \emph{minimal} if it has no nontrivial subsystem. 
Equivalently, it is minimal if and only if $\overline{\mathcal{O}(x)} = X$ for all $x \in X$.

Let $(X, T)$ be a tds, and let $\mu$ be a Borel probability measure on $X$.
We say that $\mu$ is \emph{$T$-invariant} if $\mu(A) = \mu(T^{-1} A)$ 
for all Borel $A \subseteq X$. 
We will write $\Minv{X}{T}$ for the set of all such measures on $X$. 
This set is guaranteed to be nonempty.
Let $C(X)$ be the Banach space of all continuous functions $X \to \R$,
and consider the dual $C(X)^*$ with the weak* topology. 
Then $\Minv{X}{T}$ can be viewed as a compact convex subset of $C(X)^*$. 

A \textbf{Choquet simplex} $X$ is a compact convex, metrizable space space such that all $x\in X$ can be uniquely represented by a measure supported on its extreme points.

Let $\A$ be a finite set. Then $\A^* := \bigcup_{n \ge 0} \A^n$ is the family of all 
finite sequences with entries from $\A$, including the empty sequence of length zero.
If $\A$ is finite, then $\A^\Z$ is compact, 
and we use $\sigma \colon \A^\Z \to \A^\Z$ to denote the shift map. 
A \emph{subshift} is a closed set $X \subseteq \A^\Z$ such that $\sigma X = X$. 
Sometimes, we also call the topological dynamical system $(X, \sigma|_X)$ a subshift.
We also define the \emph{language of a subshift $X$} to be 
$$
    \Lang(X) := \left\{ x[i, j) \colon x \in X \textrm{ and } i \le j \right\}.
$$

A \textbf{Polish space} is a separable completely metrizable topological space. An \textbf{equivalence relation} $E$ on a Polish space is \textbf{Borel} if it is a Borel subset of $X^2$ and is \textbf{countable} if every equivalence class is countable. A countable Borel equivalence relation $E$ is \textbf{hyperfinite} if $E$ can be written as an increasing union of finite Borel equivalence relations.

Let $X$ and $Y$ be two Polish spaces and $E,F$ be two equivalence relations defined on $X$ and $Y$, respectively. A \textbf{Borel reduction} is a Borel map from $X$ to $Y$ such that
$$
x_1Ex_2\,\,\Leftrightarrow\,\,f(x_1)Ff(x_2). 
$$
An equivalence relation on a Polish space is \textbf{smooth} if it is Borel reducible to $=$ on $\mathbb{R}$.

\section{Toeplitz subshifts and generalized Oxtoby subshifts} \label{3}
Toeplitz subshifts and Toeplitz sequences are a standard method of constructing 
minimal subshifts, see for example \cite{DownarowiczSurvey} for the survey of the topic.
We will follow closely the notation from \cite{Kaya_2017}. 
An element $x \in \A^\Z$ is called a \emph{Toeplitz sequence} 
if for all $i \in \Z$ there exists $n \ge 1$ such that 
$x(i) = x(i + kn)$ for all $k \in \Z$. 
A \emph{Toeplitz subshift} is a subshift that is of the form 
$\overline{\mathcal{O}(x)}$ for some Toeplitz sequence $x$.
Every Toeplitz subshift is minimal.
We introduce the following notions:
\begin{itemize}
    \item For $x \in \A^\Z$ and an integer $p > 1$, define the \emph{$p$-periodic part of $x$},
    $$
        \Per_p(x) := \left\{
            i \in \Z \colon x(i)=x(i + kp) \textrm{ for all } k \in \Z
        \right\}.
    $$
    Clearly, if $r$ divides $p$, then $\Per_r(x) \subseteq \Per_p(x)$.
    We say that \emph{$p$ is an essential period of $x$} if
    $\Per_p(x) \not \subseteq \Per_r(x)$ for all $r < p$. 

    \item A number $i \in \Z$ is called a \emph{$p$-hole} if $i \not \in \Per_p(x)$.

    \item The \emph{$p$-skeleton of $x$} is the sequence $\Skel(x, p)$
    defined as 
    \begin{equation*}
        \Skel(x, p)(i) := 
        \begin{cases}
            x(i), & \textrm{ if } i \in \Per_p(x), \\
            \blank & \textrm{ otherwise.}
        \end{cases}
    \end{equation*}
    We will call $\blank$ the blank symbol. 
    Observe that $\Skel(x, p)$ is a $p$-periodic sequence.
    
    \item Now let $x$ be a Toeplitz sequence.
    A \emph{period structure for $x$} is a strictly increasing
    sequence of integers $(p_t)_{t \ge 0}$ such that 
    $p_0 = 1$, $p_t$ divides $p_{t + 1}$ for all $i$, and 
    $$
        \bigcup_{t \ge 1} \Per_{p_t}(x) = \Z.   
    $$
    As a consequence of definitions, for every Toeplitz sequence $x$ 
    there exists a period structure for it.
\end{itemize}

Given some sequences $x, y \in (\A \sqcup \{ \blank \})^\Z$, we will write 
$x \isfilled y$ if $x$ can be obtained by replacing some of blank symbols in $y$
by symbols in $\A$. In other words, $x \isfilled y$ iff 
$y[i] \neq \blank$ implies $x[i] = y[i]$ for all $i \in \Z$.

We will now describe a general method of producing 
a Toeplitz sequence $x$ over the alphabet $\A$.
First, choose any sequence $1 = p_0 < p_1 < p_2 < \dots$ such that $p_t$ divides $p_{t+1}$,
which will be a period structure for $x$. 
Roughly speaking, one can fill some positions in $x$ by 
symbols that repeat with period $p_1$, then fill out some of the remaining gaps
by symbols that repeat with period $p_2$, and so on.
More formally, for all $t \ge 0$, 
we will define $x_t \in (\A \sqcup \{ \blank \})^\Z$, such that
$x_t$ is $p_t$-periodic and $x_{t+1} \isfilled x_t$.
We will use blank symbol $\blank$ to represent 
a position waiting to be filled at a later step.
\begin{enumerate}[label=(\roman*)]
    \item Begin with $p_0 = 1$ and $x_0 := \dots \blank \blank \blank \dots$.
        
    \item We define $x_{t+1}$ for $t \ge 0$ by induction.
        We have $x_t$ already defined, and it is $p_t$-periodic, 
        therefore $p_{t + 1}$-periodic. 
        Let 
        $R_{t + 1} := \left\{i \in [0, p_{t+1}) \colon x_t(i) = \blank \right\}$.
        Pick a set 
        $I_{t + 1} \subseteq 
        R_{t + 1}$
        and an assignment of symbols $\phi_{t + 1} \colon I_{t + 1} \to \A$.
        Define
        \begin{equation*}
            x_{t + 1}(i) := 
            \begin{cases}
                \phi_{t + 1}(r), & \textrm{ if } i \equiv r \pmod{p_{t + 1}}
                \textrm{ and } r \in I_{t + 1},\\
                x_t(i), & \textrm{ otherwise. } 
            \end{cases}
        \end{equation*}
        It is clear that $x_{t + 1}$ is $p_{t + 1}$-periodic and 
        $x_{t + 1} \isfilled x_t$.

    \item If we have filled out all the positions
    (i.e. for all $i \in \Z$, there exists $t$ such that $x_t[i] \neq \blank$),
    then $x_t$ converges to $x \in \A^\Z$ which is a Toeplitz sequence. 
    For example, one can 
    choose sets $I_t$ in such a manner that $x_t[-t, t)$ contains no blank.
        
\end{enumerate}
Below we illustrate possible first two steps of the construction, 
where $p_1 = 4, p_2 = 8$. 
\begin{align*}
	x_0 := & 
    \dots
    \square \square \square \square \square \square \square \square
    \square \square \square \square \square \square \square \square
    \square \square \square \square \square \square \square \square
    \square \square \square \square \square \square \square \square
    \dots, \\
	x_1 := & 
    \dots
    \square \one \square \square \square \one \square \square
    \square \one \square \square \square \one \square \square
    \square \one \square \square \square \one \square \square
    \square \one \square \square \square \one \square \square
    \dots, \\
	x_2 := & 
    \dots
    \square \one \zero \zero \square \one \square \square
    \square \one \zero \zero \square \one \square \square
    \square \one \zero \zero \square \one \square \square
    \square \one \zero \zero \square \one \square \square
    \dots
\end{align*}
It follows from the construction that $\Skel(x, p_t) \isfilled x_t$. 
We warn the reader that we might have $\Skel(x, p_t) \neq x_t$ 
(continuing the example, suppose we would fill all the remaining blanks with 
$1$ in the next step of the construction. 
Then we would obtain an $x$ that is $8$-periodic, 
and then $\Skel(x, p_2) = x \neq x_2$). 

Oxtoby \cite{Oxtoby1952} has given an example of a specific Toeplitz sequence 
whose orbit closure is a minimal, 
but not uniquely ergodic topological dynamical system.
Roughly speaking, his construction follows the method outlined above, 
where one takes a sequence $(p_t)_{t \ge 1}$ growing sufficiently fast, 
then takes $I_{t + 1} = ([0, p_t) \cup [p_{t+1} - p_t, p_{t+1})) \cap R_{t + 1}$
and $\phi_{t + 1}$ that is everywhere $0$ or $1$ depending on parity of $t$.
Extending his methods, Williams \cite{Williams} 
and Downarowicz \cite{DownarowiczChoquet}
have further studied the possible behavior of the set of invariant measures 
in Toeplitz systems.
This led to the following definition introduced in \cite{DKL}:
\begin{defn}
    Let $x$ be a Toeplitz sequence, 
    and $(p_t)_{t \ge 1}$ a period structure for $x$. 
    We say that $x$ is a 
    \emph{generalized Oxtoby sequence with respect to $(p_t)_{t \ge 1}$}
    if for each $t \ge 1$ and $k \in \Z$, 
    if $x[k p_t, (k + 1) p_t) $ contains a $p_{t+1}$-hole,
    then all $p_t$-holes in this interval are $p_{t+1}$-holes. 
\end{defn}
\begin{defn}
    A subshift $X \subseteq \A^\Z$ is a 
    \emph{generalized Oxtoby subshift with respect to $(p_t)_{t \ge 1}$}
    if $X = \overline{\mathcal{O}(x)}$ for some $x$ 
    that is a generalized Oxtoby sequence with respect to $(p_t)_{t \ge 1}$.
\end{defn}

We will see that in the class of generalized Oxtoby subshifts 
(with respect to some fixed period structure) 
the isomorphism relation is hyperfinite. 
Simultaneously, every Choquet simplex can arise as the simplex of invariant measures
of such subshift.

\section{Simplices of invariant measures for generalized Oxtoby subshifts} \label{4}
In \cite{DownarowiczChoquet}, Downarowicz has proven that every compact metric 
Choquet simplex can be realised as the simplex of invariant measures of some Toeplitz
subshift $X$ over the alphabet $\{0, 1\}$. We will examine this proof to see that 
$X$ can be constructed so that it is a generalized Oxtoby subshift. We begin by reviewing some definitions from \cite{DownarowiczChoquet}:

\begin{defn}
    Let $b \in \{0, 1\}^n$ for some $n \ge 1$. For $b_0 \in \{0, 1\}^*$,
    we define
    \begin{equation*}
        F^*_{b_0}(b) := 
        \frac{1}{\len(b_0)}
        \card{ \left\{
            m \in \mathbb{N} \colon 0 \le m \le \len(b_0) - \len(b) \wedge
            b_0[m, m + len(b)) = b
        \right\} },
    \end{equation*}
    \begin{multline*}
        F^{**}_{b_0}(b, k, j) := 
        \frac{1}{\len(b_0)}
        \bigl| \bigl\{
            m \in \mathbb{N} \colon 0 \le m \le \len(b_0) - \len(b) 
             \textrm{ and } \\ b_0[m, m + len(b)) = b
            \textrm{ and } m \equiv k \textrm{ mod } j
        \bigr\} \bigr|.
    \end{multline*}
    Let $v^*(b) := 1 / 2^{\len(b)}$. 
    Fix also a sequence of positive real numbers $(c_k)_{k \ge 1}$, with 
    $\sum_{k \textrm{ odd}} c_k = 1$.
    For a triple $(b, k, j)$ with $k$ odd and $0 \le j < k$, let
    $v^{**}(b, k, j) := c_k / 2^{\len(b)}$.
    Finally, for a measure $\mu$ on $\{0, 1\}^\mathbb{Z}$, define
    \begin{align*}
        d^*(b_0, \mu) := & \sum_{b \in \{0, 1\}^*}
            \lvert F^*_{b_0}(b) - \mu([b]) \rvert v^*(b), \\
        d^{**}(b_0, \mu) := & \sum_{
        \substack{
        b \in \{0, 1\}^* \\
        k \textrm{ odd}, 0 \le j < k}
        }
        \left\lvert F^{**}_{b_0}(b, k, j) - \frac{\mu([b])}{k} \right\rvert v^{**}(b).
    \end{align*}
\end{defn}

\begin{defn}
    Let $Y \subseteq \{0, 1\}^\mathbb{Z}$ be a subshift.
    We use $\mathcal{M}_0(Y)$ to denote the set of those measures $\mu \in \Minv{Y}{\sigma}$ 
    which satisfy 
    $$d^*(b_n, \mu) \to 0 \; \Rightarrow \; d^{**}(b_n, \mu) \to 0 $$
    for all sequences $(b_n)_{n \ge 0}$ of words from $\Lang(Y)$. 
\end{defn}

From \cite{DownarowiczChoquet} one can deduce the following:
\begin{thm}
    For every compact metric Choquet simplex $C$ there exists 
    a Toeplitz subshift $X$ over the alphabet $\{0, 1\}$
    such that $\Minv{X}{\sigma}$ is affinely homeomorphic to $C$.
    Moreover, one can find such $X$ that is additionally a 
    generalized Oxtoby subshift with respect to $(p_t)_{t \ge 1}$, where 
    $p_t = \prod_{i=1}^t 2^{i + 1}$.
\end{thm}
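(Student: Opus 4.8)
The plan is to reread Downarowicz's construction in \cite{DownarowiczChoquet} and check that the Toeplitz sequence $x$ it produces (with $X = \overline{\mathcal{O}(x)}$) can be arranged so that its pattern of holes is hierarchical in exactly the way the generalized Oxtoby definition demands. Recall the shape of that construction in the notation of Section~\ref{3}: one fixes a fast-growing period structure $1 = p_0 < p_1 < p_2 < \dots$ and builds $x$ as the limit of the $p_t$-periodic approximations $x_t = \dots B_t B_t B_t \dots$ aligned at the origin, where the building block $B_{t+1} \in (\{0,1\} \sqcup \{\blank\})^{p_{t+1}}$ is a concatenation of $p_{t+1}/p_t$ copies of $B_t$, in some of which all remaining $\blank$'s have been replaced by symbols of $\{0,1\}$ (the replacement may depend on the copy), and with the other copies left untouched. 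In the language of step (ii) of Section~\ref{3}, this says that $I_{t+1}$ is a union of whole $p_t$-blocks of blanks: for every $k$, either every $p_t$-hole of $[kp_t,(k+1)p_t)$ lies in $I_{t+1}$, or none does. The measure-theoretic heart of the matter --- that for the constructed $X$ one has $\Minv{X}{\sigma} = \mathcal{M}_0(X)$ and this simplex is affinely homeomorphic to $C$ --- is precisely Downarowicz's theorem, which we take as a black box; our task is only to bookkeep the holes.

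Granting the block picture, the argument is short. Fix the period structure $p_t := \prod_{i=1}^t 2^{i+1}$: each $p_t$ is a power of two and $p_{t+1}/p_t$ is a power of two tending to infinity, which is all that Downarowicz's frequency-approximation lemmas (those controlling $d^*$ and $d^{**}$) demand of the periods, so his free parameters may simply be re-indexed against this structure. One also records, as Downarowicz does, that the period structure is honest, i.e. $\Skel(x,p_s) = x_s$ for every $s$ (no blank of $x_s$ gets accidentally resolved with period $p_s$ at a later stage); consequently ``$i$ is a $p_t$-hole'' means exactly ``$x_t(i) = \blank$'', that is, ``the $p_t$-block containing $i$ was one of the copies left untouched at every stage $\le t$''. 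Now let $[kp_t,(k+1)p_t)$ be a $p_t$-block that contains a $p_{t+1}$-hole; then this block is itself a copy of $B_t$ that was left untouched at stage $t+1$, so every position of it that survived untouched through stage $t$ also survives untouched through stage $t+1$ --- that is, every $p_t$-hole of the block is a $p_{t+1}$-hole. Hence $x$ is a generalized Oxtoby sequence with respect to $(p_t)$. Finally, arranging at infinitely many stages to fill the central copies of $B_t$ makes every coordinate eventually determined, so $x$ is a genuine Toeplitz sequence and $X$ a Toeplitz subshift over $\{0,1\}$.

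I expect the main obstacle to be justifying the block picture itself, i.e. that Downarowicz's construction can be run with all blank-filling performed on whole $p_t$-blocks at once. As literally written, his procedure may tune the frequencies $F^{**}_{b_0}(b,k,j)$ by assigning symbols to positions \emph{scattered} inside a copy of $B_t$ rather than to whole copies, and such a fill would violate the generalized Oxtoby condition. The work is therefore to reorganize the recursion so that each stage only decides, for each copy of $B_t$, whether to fill it completely or leave it entirely alone, \emph{without} changing which measures end up in $\mathcal{M}_0(X)$. I expect this is achievable by slowing the construction down --- inserting extra intermediate period levels, still powers of two so that the product telescopes to $p_t = \prod_{i=1}^t 2^{i+1}$ --- so that one round of the original frequency-tuning is spread over several block-wise steps, and then re-checking Downarowicz's estimates against the slowed schedule. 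Once that bookkeeping is in place, the generalized Oxtoby property is immediate from the block structure, as in the previous paragraph.
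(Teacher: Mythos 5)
Your outline hits the crux of the paper's verification: the key structural facts are (a) that Downarowicz's blank-filling happens on whole $p_t$-blocks --- in the notation of Section 3, $I_{t+1}$ is a union of the blank positions inside selected $p_t$-subintervals of $[0,p_{t+1})$ --- and (b) that the resulting $x$ has $\Skel(x,p_t) = x_t$, so that ``$p_t$-hole of $x$'' and ``blank of $x_t$'' coincide. Given (a) and (b) the generalized Oxtoby condition falls out exactly as you describe.

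Where the proposal is short of a proof is on both points. For (a), you flag it as ``the main obstacle'' and propose reorganizing the recursion by slowing it down; this is unnecessary, because Downarowicz's construction as written is already Oxtoby-like: at step $t+1$ one fills the blanks of precisely one $p_t$-block in each period $p_{t+1}$ --- the leftmost $[0,p_t)+kp_{t+1}$ when $t$ is odd, the rightmost $[-p_t,0)+kp_{t+1}$ when $t$ is even --- with the consecutive symbols of a word $b_{t+1}\in\Lang(Y)$, and leaves the remaining copies untouched. The frequency control of $F^{**}$ is achieved through the \emph{choice of the words} $(b_t)$, not by scattering symbols inside a $p_t$-block, so there is no schedule to redesign and no new estimates to re-check. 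For (b), you assert $\Skel(x,p_t)=x_t$ ``as Downarowicz does'', but it is not automatic and the paper actually proves it: since every $w\in\Lang(Y)$ occurs as a prefix of infinitely many $b_T$, for each blank position $i$ of $x_t$ (say the $k$-th blank in $[0,p_t)$) and each $\alpha\in\{0,1\}$ one finds $T\ge t$ with $b_T(k)=\alpha$, and because whole $p_t$-blocks are filled at every step between $t$ and $T$, the symbol $\alpha$ lands at some $i_\alpha\equiv i\pmod{p_t}$; both symbols therefore occur in this residue class, so $i$ is a genuine $p_t$-hole. This argument, together with setting aside the degenerate cases $Y=\{0^\infty\}$ and $Y=\{1^\infty\}$, is what your plan leaves implicit; without it the generalized Oxtoby property for $x$ (as opposed to a statement about the approximants $x_t$) would not follow.
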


\begin{proof}
    The first part of theorem is stated explicitly in 
    \cite{DownarowiczChoquet}*{Theorem 5}.
    The "moreover" part follows from a closer reading of its proof. 
    Let us restate the relevant steps:
    \begin{enumerate}[label=(\roman*)]
        \item Given the simplex $C$, there exists a (not necessarily minimal)
        subshift $Y \subseteq \{0, 1\}^\Z$ such that $\Minv{Y}{\sigma}$ 
        is affinely homeomorphic to $C$, and $\mathcal{M}_0(Y) = \Minv{Y}{\sigma}$
        \cite{DownarowiczChoquet}*{Theorem 3}.
        
        \item \label{label:31.ii} 
        One selects a sequence $(b_t)_{t \ge 1}$ such that $b_t \in \Lang(Y)$,
        $|b_t| = \prod_{i = 1}^t (2^i - 1)$, 
        and for every $w \in \Lang(Y)$ there exist infinitely many $t$ such that 
        $w$ is a prefix of $b_t$. 

        \item \label{label:31.iii} We carry out a construction similar to the one in previous section.
        Define $x_1$ by setting $x_1(i) = b_1$ if $i \equiv 0 \pmod{4}$, otherwise $x_1(i) = \blank$.
        After step $t$, we will have $x_t$ that is $p_t$-periodic,
        with $\prod_{i = 1}^t (2^i - 1) = |b_t|$ blank symbols in
        the interval $[0, p_t)$. 
        If $t$ is odd, we form $x_{t + 1}$ by replacing the blanks in
        each interval $[0, p_t) + kp_{t + 1}$ 
        with the consecutive symbols from $b_{t + 1}$. 
        If $t$ is even, we form $x_{t + 1}$ by replacing the blanks in
        each interval $[-p_t, 0) + kp_{t + 1}$ 
        with the consecutive symbols from $b_{t + 1}$. 
        As $t \to \infty$, we have $x_t \to x \in \{0, 1\}^\Z$ 
        that is a Toeplitz sequence. 
        Using the equality $\mathcal{M}_0(Y) = \Minv{Y}{\sigma}$, Downarowicz shows that
        $X := \overline{\mathcal{O}(x)}$ is a Toeplitz subshift such that
        $\Minv{X}{\sigma}$ is affinely homeomorphic to $\Minv{Y}{\sigma}$. 
        By (i) we have that $\Minv{Y}{\sigma}$ is affinely homeomorphic to $C$,
        which ends the proof of the first part.
    \end{enumerate}

    We claim that $x$ is a generalized Oxtoby sequence w.r.t. $(p_t)_{t \ge 1}$.
    If $Y = \{0^\infty\}$ or $Y = \{1^\infty\}$ then 
    $x = 0^\infty$ or $x = 1^\infty$ and the claim holds.
    
    For $Y \neq \{0^\infty\}, \{1^\infty\}$, 
    we claim that $\Skel(x, p_t) = x_t$. 
    Recall that $\Skel(x, p_t) \isfilled x_t$.
    So it suffices to show that if $x_t(i) = \blank$,
    then $\Skel(x, p_t)(i) = \blank$ too.
    Both $\Skel(x, p_t)$ and $x_t$ are $p_t$-periodic, 
    so without loss of generality, we assume that $i \in [0, p_t)$. 
    Suppose that $x_t(i)$ is the $k$-th blank in $x_t[0, p_t)$.
    Let $\alpha \in \{0, 1\}$. 
    There exists $w \in \Lang(Y)$ such that $w(k) = \alpha$. 
    By \ref{label:31.ii}, 
    there exists $T \ge t$ such that $w$ is a prefix of $b_{T}$,
    so $b_{T}(k) = \alpha$.
    During the steps $t + 1, \dots T$
    of the construction described in \ref{label:31.iii},
    we always fill out entire intervals of form $[kp_t, (k+1)p_t)$.
    For this reason, during the $T$-th step, we will put the symbol $\alpha = b_T(k)$
    at $x_T(i_\alpha)$ for some $i_\alpha \equiv i \pmod{p_t}$.
    So we have $x(i_1) = 1 \neq 0 = x(i_0)$ 
    and $i_1 \equiv i \equiv i_0 \pmod{p_t}$, 
    we conclude that $\Skel(x, p_t)(i) = \blank$.
    
    Finally, look again at the steps described in \ref{label:31.iii}.
    For all $t$, in $t+1$-th step we fill out entire intervals of
    form $[kp_t, (k+1)p_t)$.
    Since $\Skel(x, p_t) = x_t$ for all $t$,
    we conclude that $x$ is a generalized Oxtoby sequence w.r.t. 
    $(p_t)_{t \ge 1}$. 
\end{proof}

\section{Further analysis of Toeplitz subshifts} \label{5}
\begin{defn}
    Let $X$ be a Toeplitz subshift, and $x \in X$.
    Let $p > 1$ be any integer. For $0 \le k < p$, we define
    $$
        A(x, p, k) := \{ \sigma^i x \colon i \equiv k \pmod{p} \}.
    $$

     Let 
        $$
            \Parts(X, p) := \{ \overline{A(x, p, k)} \colon 0 \le k < p \}.
        $$
\end{defn}
The next lemma will show that the definition of $\Parts(X, p)$ does not depend on the choice of $x$. It is similar to Lemmas 6, 7 from \cite{Kaya_2017}, 
who in turn cites \cite{Williams}. 

\begin{lem} \label{lem:parts_of_toeplitz}
    Let $X$ be a Toeplitz subshift, and $x \in X$.
    Let $p > 1$ be any integer. For $0 \le k < p$,
    \begin{enumerate}[label=(\roman*)]
        \item We have
        $\sigma \overline{A(x, p, k)} = \overline{A(x, p, k + 1)}$.
        \item For all $x \in \overline{A(x, p, k)}$, we have 
        $\Skel(x, p) = \Skel(\sigma^k x, p)$. 
        In other words, all elements of $\overline{A(x, p, k)}$
        have the same $p$-skeleton.
        \item For any $0 \le k, l < p$, the sets 
        $\overline{A(x, p, k)}$ and $\overline{A(x, p, l)}$
        are either disjoint or equal. 
        \item  $\Parts(X, p)$ is a clopen partition of $X$,
        independent of the choice of $x$.
    \end{enumerate}
\end{lem}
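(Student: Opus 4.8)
The plan is to recognize the family $\Parts(X,p)$ as the set of minimal subsystems of the system $(X,\sigma^p)$. Since $X$ is a Toeplitz subshift it is minimal, and minimality is the only feature of $X$ that I will use. The a priori delicate point is the clause of (ii) asserting that a $p$-hole of $\sigma^k x$ remains a $p$-hole of every element of $\overline{A(x,p,k)}$: a direct limiting argument only controls where skeletons can \emph{acquire} symbols, not where they are forced to keep blanks, and the identification above is precisely what lets one circumvent this. Part (i) itself is a direct computation: $\sigma$ restricts to a homeomorphism of $X$, one has $\sigma A(x,p,k)=A(x,p,k+1)$ with indices read modulo $p$, and a homeomorphism commutes with closure, so $\sigma\overline{A(x,p,k)}=\overline{A(x,p,k+1)}$; in particular each $\overline{A(x,p,k)}$ is $\sigma^p$-invariant.

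Next I would isolate the single symbolic input, call it $(\star)$: if $w\in\A^\Z$ is a limit of sequences of the form $\sigma^{m_n p}v$, then $\Skel(w,p)$ agrees with $\Skel(v,p)$ at every position where the latter is non-blank. This is immediate, since $\Skel(v,p)$ is $p$-periodic: if $\Skel(v,p)(i)=a\neq\blank$ then $v$ is constantly $a$ on the whole progression $i+p\Z$, hence so is each $\sigma^{m_n p}v$, hence so is $w$, giving $i\in\Per_p(w)$ and $\Skel(w,p)(i)=a$. Applying $(\star)$ in both directions to two points of a common minimal subsystem $Y$ of $(X,\sigma^p)$ — in a minimal system each point is a limit of forward iterates of the other — shows that $\Skel(\cdot,p)$ is constant on $Y$, and therefore on every minimal subsystem of $(X,\sigma^p)$.

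Now the structural step. Fix a minimal subsystem $Y$ of $(X,\sigma^p)$ and let $d\geq 1$ be least with $\sigma^d Y=Y$, so $d\mid p$. Because $\sigma$ conjugates $\sigma^p$ to itself, $Y,\sigma Y,\dots,\sigma^{d-1}Y$ are pairwise disjoint minimal subsystems of $(X,\sigma^p)$ whose union is closed and $\sigma$-invariant, hence all of $X$ by minimality of $(X,\sigma)$; thus $X=\bigsqcup_{j=0}^{d-1}\sigma^j Y$ is a partition into $d$ clopen pieces, and these pieces are exactly the minimal subsystems of $(X,\sigma^p)$. Writing $x\in\sigma^{j_0}Y$, minimality of $(Y,\sigma^p)$ makes $A(x,p,0)$ dense in $\sigma^{j_0}Y$, so $\overline{A(x,p,0)}=\sigma^{j_0}Y$ and, by (i), $\overline{A(x,p,k)}=\sigma^{j_0+k}Y$. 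Hence $\Parts(X,p)=\{\sigma^j Y:0\le j<d\}$ is precisely the set of minimal subsystems of $(X,\sigma^p)$: a clopen partition of $X$ with no dependence on $x$, which is (iv), and which gives (iii) since distinct minimal subsystems are disjoint. Finally (ii): if $y\in\overline{A(x,p,k)}=\sigma^{j_0+k}Y$ then $\sigma^k x\in\sigma^{j_0+k}Y$ as well, so $y$ and $\sigma^k x$ lie in a common minimal subsystem of $(X,\sigma^p)$, whence $\Skel(y,p)=\Skel(\sigma^k x,p)$ by the previous paragraph.

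I expect the only real obstacle to be locating the right viewpoint — once one commits to analyzing $(X,\sigma^p)$ rather than individual skeleton entries, each of the four parts is short. In particular the ``holes stay holes'' difficulty never has to be met directly: it is absorbed into the symmetric use of $(\star)$ on a minimal $\sigma^p$-subsystem.
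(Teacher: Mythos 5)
Your proposal is correct, and it takes a genuinely different route from the paper. The paper's proof is more hands-on: for (ii) it simply cites Kaya's Lemma 6, and for (iii)--(iv) it establishes the claim that $y\in\overline{A(x,p,k)}$ forces $\overline{A(x,p,k)}=\overline{A(y,p,0)}$ by a direct sandwiching argument, namely showing $\overline{A(y,p,0)}\subseteq\overline{A(x,p,k)}\subseteq\overline{A(y,p,k+\alpha)}$ via explicit limits $\sigma^{m_l+n_j}x\to z$ and then closing the chain using (i) repeatedly. You instead identify $\Parts(X,p)$ structurally as the collection of minimal subsystems of $(X,\sigma^p)$, which yields (iii)--(iv) at once (distinct minimal subsystems are disjoint, finitely many of them partition $X$, and the description is manifestly $x$-independent). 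Your treatment of (ii) is also self-contained where the paper defers to Kaya: the one-sided lemma $(\star)$ is the honest symbolic content, and the symmetric application on a common minimal $\sigma^p$-subsystem is exactly what converts ``non-blanks are preserved in the limit'' into ``the whole skeleton is preserved,'' sidestepping the genuine difficulty you flag about blanks not being automatically preserved. What the paper's approach buys is that it stays entirely at the level of sequences and avoids invoking the existence/uniqueness structure of minimal sets for $\sigma^p$; what yours buys is a shorter, more conceptual argument that makes the independence from $x$ and the clopenness transparent rather than emerging from a case analysis. Both are correct; yours is arguably cleaner and is a standard viewpoint on Toeplitz $p$-skeletons (the $\sigma^j Y$ are the fibers of the maximal equicontinuous factor at level $p$), though the paper prefers to remain close to Kaya's framework since it reuses his later lemmas.
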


\begin{proof}
    Item (i) is straightforward.
    Item (ii) is precisely the Lemma 6 of \cite{Kaya_2017}.
    Next, we claim that if $y \in \overline{A(x, p, k)}$,
    then $\overline{A(x, p, k)} = \overline{A(y, p, 0)}$.
    For a suitable sequence $n_j$ we have $\sigma^{n_j} x \to y$,
    and $n_j \equiv k$ for all $j$. 
    Take $z \in \overline{A(y, p, 0)}$, 
    so for a suitable sequence $m_j$ we have $\sigma^{m_j} y \to z$,
    and $m_j \equiv 0$ for all $j$ (here we use $\equiv$ to mean equality modulo $p$). 
    Consider any $\epsilon > 0$ and a compatible metric $d$ on $X$.
    Take $l$ such that $d(\sigma^{m_l} y, z) < \epsilon / 2$. 
    Since the map $\sigma^{m_l}$ is continuous and $\sigma^{n_j} x \to x$,
    we will have $d(\sigma^{m_l + n_j} x, z) < \epsilon$ for large enough $j$.
    This works for every $\epsilon > 0$ and $m_l + n_j \equiv k$, 
    so we conclude that $z \in \overline{A(x, p, k)}$.
    We've shown $\overline{A(x, p, k)} \supseteq \overline{A(y, p, 0)}$.
    Since the system is minimal, we have $\sigma^{n_j'}y \to x$ for some
    sequence of integers $n_j'$, and by choosing a subsequence, we can assume
    that $n_j' \equiv \alpha$ for some $\alpha$, for all $j$.
    If $z \in \overline{A(x, p, k)}$, then 
    then $\sigma^{m_j'} x \to z$ and $m_j' \equiv k$ for all $j$,
    and by the same argument as before we conclude that 
    $z \in \overline{A(y, p, k + \alpha)}$. Altogether, we got
    \begin{equation} \label{eqn:1}
        \overline{A(y, p, 0)} \subseteq
        \overline{A(x, p, k)} \subseteq
        \overline{A(y, p, k + \alpha)}.
    \end{equation}
    If $k + \alpha \equiv 0$, then 
    $\overline{A(y, p, 0)} = \overline{A(y, p, k + \alpha)}$ and the claim is proven.
    Otherwise, by repeated application of (i), we get
    $$
        \overline{A(y, p, 0)} \subseteq
        \overline{A(y, p, k + \alpha)} \subseteq
        \overline{A(y, p, 2(k + \alpha))} \subseteq
        \dots \subseteq
        \overline{A(y, p, p(k + \alpha))} = \overline{A(y, p, 0)},
    $$
    so again 
    $\overline{A(y, p, 0)} = \overline{A(y, p, k + \alpha)}$ and the claim is proven.

    With this claim, the remaining items are readily proven. 
    For (iii), suppose that the sets are not disjoint, so
    $y \in \overline{A(x, p, k)}$ and $\overline{A(x, p, l)}$. 
    Then we have 
    $ \overline{A(x, p, k)} = 
    \overline{A(y, p, 0)} = 
    \overline{A(x, p, l)}$. 
    
    For (iv), note that $\Parts(X, p)$ is a collection of disjoint sets by (iii).
    Next, we claim that $\bigsqcup_{k=0}^{p-1} \overline{A(x, p, k)} = X$. 
    Indeed, for any $y \in X$ we can find a sequence $n_j$ such that 
    $\sigma^{n_j} x \to y$, and after moving to a subsequence we can assume
    that $n_j \equiv k$ for some $k$ and all $j$, 
    so $y \in \overline{A(x, p, k)}$. 
    So $\Parts(X, p)$ is a finite closed partition, which implies that each set 
    in the partition is clopen. 
    It remains to show that it doesn't depend on the choice of $x$.
    Take any $y \in X$. 
    For some $k$, we have $y \in \overline{A(x, p, k)}$.
    By the claim, we have $\overline{A(y, p, 0)} = \overline{A(x, p, k)}$.
    By repeated application of (i), the sets 
    $\overline{A(y, p, i)}, i = 0 \dots p - 1$ are the same as 
    $\overline{A(x, p, i)}, i = 0 \dots p-1$ up to cyclic permutation.
\end{proof}
\begin{lem}\label{essential period}
   Let $X$ be a Toeplitz subshift and $x\in X$. Suppose $p$ is an essential period of $x$, then we have $\overline{A(x,p,m)}=\overline{A(x,p,m')}$ if and only if $m\equiv m'$ (mod $p$).
\end{lem}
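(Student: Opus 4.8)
The reverse implication is immediate, since $m \equiv m' \pmod p$ forces $A(x,p,m) = A(x,p,m')$ as sets, hence their closures coincide. So the whole content lies in the forward implication, which I would prove by contraposition: assuming $m \not\equiv m' \pmod p$, I will show $\overline{A(x,p,m)} \neq \overline{A(x,p,m')}$, and this is exactly the point at which essentiality of $p$ is used.

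The engine of the argument is Lemma \ref{lem:parts_of_toeplitz}(ii): every element of $\overline{A(x,p,k)}$ has $p$-skeleton equal to $\Skel(\sigma^k x, p)$. First I would record the routine identity $\Skel(\sigma^k x, p) = \sigma^k \Skel(x,p)$, which follows from $\Per_p(\sigma^k x) = \Per_p(x) - k$ together with $(\sigma^k x)(j) = x(k+j)$. Now suppose $\overline{A(x,p,m)} = \overline{A(x,p,m')}$. Choosing any point $y$ of this common (nonempty) set and computing its $p$-skeleton in the two ways permitted by (ii) gives $\sigma^m \Skel(x,p) = \sigma^{m'} \Skel(x,p)$; that is, $\Skel(x,p)$ is invariant under the shift by $m'-m$. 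Since $\Skel(x,p)$ is also $p$-periodic by construction, it is invariant under the shift by any $\Z$-linear combination of $m'-m$ and $p$, hence it is $d$-periodic, where $d := \gcd(m'-m, p)$.

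It then remains to translate $d$-periodicity of $\Skel(x,p)$ back into a statement about $\Per_p(x)$. For $i \in \Per_p(x)$ we have $\Skel(x,p)(i) = x(i) \neq \blank$, and $d$-periodicity gives $\Skel(x,p)(i+kd) = \Skel(x,p)(i) \neq \blank$ for every $k \in \Z$, so $i+kd \in \Per_p(x)$ and $x(i+kd) = \Skel(x,p)(i+kd) = x(i)$; thus $i \in \Per_d(x)$. Therefore $\Per_p(x) \subseteq \Per_d(x)$. But $m \not\equiv m' \pmod p$ means $p \nmid (m'-m)$, so $d < p$, and $\Per_p(x) \subseteq \Per_d(x)$ with $d < p$ contradicts the assumption that $p$ is an essential period of $x$. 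The one slightly delicate point is the degenerate case $d = 1$ (equivalently, $\Skel(x,p)$ constant), where $\Per_1$ is not literally covered by the definition of $\Per_{(\cdot)}$ used in the paper; but in that case either $\Per_p(x) = \emptyset$ or $x$ is a constant sequence, and in either situation $p$ is not an essential period, so the hypothesis of the lemma is vacuous. I do not anticipate any real obstacle here: once Lemma \ref{lem:parts_of_toeplitz}(ii) is available, the whole proof reduces to a short manipulation of skeletons and periods.
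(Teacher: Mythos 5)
Your proof is correct and takes essentially the same approach as the paper: reduce via Lemma \ref{lem:parts_of_toeplitz}(ii) to showing that distinct shifts of the $p$-skeleton differ once $p$ is essential. You spell out the gcd/periodicity argument (and the $d=1$ degeneracy) that the paper treats as immediate when it simply asserts $\Skel(\sigma^m x, p) \neq \Skel(\sigma^{m'} x, p)$ for $m \not\equiv m' \pmod p$.
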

\begin{proof}
    It is clear that if $m\equiv m'$ (mod $p$) then $\overline{A(x,p,m)}=\overline{A(x,p,m')}$. For the other direction, since $p$ is an essential period of $x$, for all $0\leq m,m'< p$ with $m\neq m'$, we have ${\rm Skel}(\sigma^mx,p) \neq {\rm Skel}(\sigma^{m'}x,p)$. Since all elements in $\overline{A(x,p,m)}$ have the same $p$-skeleton as $S^mx$, we obtain that $\overline{A(x,p,m)}$ and $\overline{A(x,p,m')}$ are disjoint.
\end{proof}
\textbf{Remark.}
Suppose that $p$ is an essential period of $x$, 
and $X=\overline{\mathcal{O}(x)}$.
Then $\Skel(\sigma^j x, p)$, for $j=0 \dots p-1$, are pairwise different.
By item (ii) of the lemma above, this implies that
the sets $\overline{A(x, p, j)}$, for $j=0 \dots p-1$ are pairwise different.
We warn, however, that $\Parts(X, p)$ may contain less than $p$ parts.

\vspace{\baselineskip}

We will repeat some notation introduced in \cite{Kaya_2017}.
Again, let $X$ be a Toeplitz subshift, and $p > 1$ an integer.
Let $W \in \Parts(X, p)$. 
By Lemma \ref{lem:parts_of_toeplitz}(ii), 
all elements of $W$ have the same $p$-skeleton, which we denote by $\Skel(W, p)$.
By the same lemma, 
$X$ is partitioned into sets of form $\overline{A(x, p, k)}$, and each element 
of $\overline{A(x, p, k)}$ has the same skeleton as $\sigma^k x$.
So all elements of $X$ have the same $p$-skeleton up to shifting. 
A \emph{filled $p$-block} of a $p$-skeleton $s$ (where $s = \Skel(x, p)$ or $s = \Skel(W, p)$ for some $x$ or $W$) is a run of consecutive non-blank elements, 
preceded and followed by a blank.
We also define
$$
    \Parts_*(X, p) := 
    \left\{
        W \in \Parts(X, p) \colon
        \Skel(W, p)(-1) = \blank 
        \textrm{ and }
        \Skel(W, p)(0) \neq \blank
    \right\}.
$$
Note that elements of $\Parts_*(X, p)$ 
are in 1-to-1 correspondence with 
the filled $p$-blocks of the skeleton $s$.
Given $W \in \Parts_*(X, p)$,
we define $len(W) := \min \{ i \ge 0 \colon \Skel(W, p)(i) = \blank \}$.
In other words, $len(W)$ is the length of the filled $p$-block 
starting in position $0$ of $\Skel(W, p)$.

We move to analysing isomorphisms of Toeplitz subshifts. 
\begin{defn}
    Let $X \subseteq \A^\Z$.
    A function $F \colon X \to \A^\Z$ is called a \emph{block code}
    if it is of the form
    $$
        F(x)(n) = f(x_{n - k}, x_{n - k + 1}, \dots x_{n + k- 1}, x_{n + k})
        \textrm{ for all } x \in X, n \in \Z,
    $$
    for some integer $k \ge 0$ and function $f \colon \A^{2k + 1} \to \A$ is an arbitrary function.
    When $F$ is a block code, we refer to the smallest such $k$ as the \emph{width}
    or \emph{radius} of the block code $F$.
\end{defn}

The following theorem is well-known: 
\begin{thm}[Curtis-Hedlund-Lyndon] \label{thm:CHL}
    Let $X, Y \subseteq \A^\Z$ be two subshifts.
    Suppose $F \colon X \to Y$ is an isomorphism from $(X, \sigma)$ to 
    $(Y, \sigma)$.
    Then $F$ is a block code.
\end{thm}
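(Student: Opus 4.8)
\section*{Proof proposal for Theorem \ref{thm:CHL}}

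The plan is to use only that $F$ is continuous and intertwines the shifts, i.e.\ $F \circ \sigma = \sigma \circ F$; surjectivity and invertibility play no role in this direction. The first step is to analyse the single coordinate map $\pi_0 \colon X \to \A$ given by $\pi_0(x) := F(x)(0)$. Since $\A$ carries the discrete topology and $\pi_0$ is the composition of the continuous map $F$ with the continuous evaluation at coordinate $0$, the map $\pi_0$ is continuous, hence locally constant: for each $x \in X$ there is a radius $k_x \ge 0$ such that $\pi_0$ is constant on the relative cylinder $[x[-k_x, k_x]] := \{ y \in X \colon y(i) = x(i) \textrm{ for } -k_x \le i \le k_x \}$, because these cylinders form a neighbourhood basis of $x$ in $X$.

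Next I would invoke compactness of $X$. The cylinders $\{ [x[-k_x, k_x]] \colon x \in X \}$ form an open cover of $X$, so finitely many of them, say the ones determined by $x_1, \dots, x_m$ with radii $k_1, \dots, k_m$, already cover $X$. Put $k := \max_i k_i$. I claim that $\pi_0(y)$ depends only on the word $y[-k, k]$ for $y \in X$: if $y, y' \in X$ agree on the indices $[-k, k]$, pick $i$ with $y \in [x_i[-k_i, k_i]]$; since $k \ge k_i$, also $y' \in [x_i[-k_i, k_i]]$, and $\pi_0$ is constant on that cylinder, so $\pi_0(y) = \pi_0(y')$. Now define $f \colon \A^{2k+1} \to \A$ by declaring $f(w) := \pi_0(y)$ whenever there exists $y \in X$ with $y[-k, k] = w$ (well-defined by the previous sentence), and setting $f(w)$ to an arbitrary value otherwise.

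Finally I would propagate this to every coordinate using equivariance. For $x \in X$ and $n \in \Z$, the relation $F \circ \sigma = \sigma \circ F$ gives $F(\sigma^n x) = \sigma^n F(x)$, hence
\begin{align*}
    F(x)(n) &= \big( \sigma^n F(x) \big)(0) = F(\sigma^n x)(0) = \pi_0(\sigma^n x) \\
    &= f\big( (\sigma^n x)[-k, k] \big) = f\big( x(n-k), x(n-k+1), \dots, x(n+k) \big).
\end{align*}
This is exactly the required form of a block code of radius at most $k$; replacing $k$ by the least value for which such an $f$ exists yields the width.

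The only step with genuine content is the passage from pointwise local constancy of $\pi_0$ to a single uniform radius $k$, which is precisely where compactness of $X$ (equivalently, finiteness of $\A$) is essential; without it the conclusion fails. Everything else — continuity of the evaluation maps, the cylinder neighbourhood basis, and the bookkeeping with $\sigma^n$ — is routine; one need only note that the values of $f$ on words not occurring in $\Lang(X)$ are immaterial, since such words never arise as $x(n-k) \dots x(n+k)$ for $x \in X$.
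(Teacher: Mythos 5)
The paper does not supply a proof of this theorem; it cites Curtis--Hedlund--Lyndon as well known and moves on. Your argument is the standard textbook proof, and it is correct: continuity of $\pi_0(x) = F(x)(0)$ into a discrete alphabet gives local constancy on cylinders, compactness of $X$ extracts a uniform radius $k$, and shift-equivariance $F \circ \sigma = \sigma \circ F$ propagates the local rule to every coordinate. You also correctly observe that only continuity and equivariance are used, so the hypothesis that $F$ is an isomorphism is stronger than necessary, and that the values of $f$ on words outside $\Lang(X)$ are immaterial. No gaps.
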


Let $p \ge 1$ be an integer,
and let $\phi \colon \A^p \to \A^p$ be a bijection.
We will write $Sym(\A^p)$ for the set of all such bijections.
We introduce a map $\tilde{\phi} \colon \A^\Z \to \A^\Z$, 
defined by 
$$
    \tilde{\phi}(x)[kp, (k+1)p) := \phi(x[kp, (k+1)p)) 
    \textrm{ for all } x \in \A^\Z, k \in \Z.
$$

Applying Theorem \ref{thm:CHL} to Toeplitz subshifts, 
Downarowicz, Kwiatkowski and Lacroix \cite{DKL} have proved:
\begin{thm} \cite{DKL}*{Theorem 1} \label{thm:DKL}
    Let $x, y$ be Toeplitz sequences with period structure $(p_t)$. 
    The following conditions are equivalent:
    \begin{enumerate}[label=(\roman*)]
        \item there exists an isomorphism of subshifts $f \colon 
        \overline{\mathcal{O}(x)} \to 
        \overline{\mathcal{O}(y)}
        $
        that sends $x$ to $y$,
        \item for some $p_t \ge 1$, 
        there exists a function $\phi \in Sym(\A^{p_t})$
        such that
        $\tilde{\phi} (x) = y$.
    \end{enumerate}
\end{thm}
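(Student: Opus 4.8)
The plan is to prove the two implications separately: (ii)$\Rightarrow$(i) is a direct construction, while (i)$\Rightarrow$(ii) is where the Toeplitz structure does the real work.

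\emph{From (ii) to (i).} Suppose $\phi \in Sym(\A^{p_t})$ satisfies $\tilde\phi(x) = y$; write $p = p_t$, $X = \overline{\mathcal{O}(x)}$, $Y = \overline{\mathcal{O}(y)}$. The obstruction is that $\tilde\phi$ commutes with $\sigma^p$ but not with $\sigma$, so I would repair this using the odometer. Recall the standard fact about Toeplitz systems (see \cite{DownarowiczSurvey}) that $X$ factors continuously onto the odometer $\varprojlim_s \Z/p_s\Z$ by a map sending $x$ to $0$ and intertwining $\sigma$ with addition of $1$; composing with the projection onto the $t$-th coordinate gives a continuous $\pi_t \colon X \to \Z/p\Z$ with $\pi_t(\sigma z) = \pi_t(z) + 1$, $\pi_t(x) = 0$, and each fibre $\pi_t^{-1}(s)$ clopen. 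I would then set $f(z) := \sigma^{s}\, \tilde\phi(\sigma^{-s} z)$, where $s \in \{0, \dots, p - 1\}$ represents $\pi_t(z)$; since $\tilde\phi$ commutes with $\sigma^p$ this depends only on $s$ modulo $p$, so $f$ is well defined, and it is continuous because it is continuous on each of the finitely many clopen fibres. A one-line computation from $\pi_t(\sigma z) = \pi_t(z) + 1$ gives $f \circ \sigma = \sigma \circ f$, and $f(x) = \tilde\phi(x) = y$, so by continuity and equivariance $f(X) = \overline{\mathcal{O}(y)} = Y$. Running the same construction with $\phi^{-1}$ in place of $\phi$ (legitimate because $(p_t)$ is also a period structure for $y$) produces a continuous equivariant $g \colon Y \to X$ with $g(y) = x$; then $g \circ f$ is continuous, equivariant, and fixes $x$, hence fixes the dense forward orbit of $x$, hence equals the identity, and likewise $f \circ g = \mathrm{id}$. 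So $f$ is the required isomorphism.

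\emph{From (i) to (ii).} Let $f \colon \overline{\mathcal{O}(x)} \to \overline{\mathcal{O}(y)}$ be an isomorphism with $f(x) = y$. By Theorem \ref{thm:CHL} both $f$ and $f^{-1}$ are block codes; let $r$ bound both radii. Because $(p_t)$ is a period structure for both $x$ and $y$, the sets $\Per_{p_t}(x)$ and $\Per_{p_t}(y)$ increase to $\Z$, so I can fix $t$ with $p_t > 2r$ and with all of the finitely many positions $-r, -r+1, \dots, r-1$ lying in $\Per_{p_t}(x) \cap \Per_{p_t}(y)$; put $p = p_t$. Now fix $k \in \Z$ and $0 \le j < p$: the letter $y(kp + j)$ is a fixed function of $x$ on positions $kp + j - r$ through $kp + j + r$, and as $j$ ranges over $[0, p)$ these windows cover $[kp - r, (k+1)p + r)$. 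The positions outside $[kp, (k+1)p)$ that occur, namely those in $[kp - r, kp)$ and in $[(k+1)p, (k+1)p + r)$, have residues among $-r, \dots, r - 1$ modulo $p$, which lie in $\Per_p(x)$; hence the two "overhang" words $x[kp - r, kp)$ and $x[(k+1)p, (k+1)p + r)$ do not depend on $k$. It follows that $y[kp, (k+1)p)$ is a fixed function $\phi_0 \colon \A^p \to \A^p$ of the block $x[kp, (k+1)p)$, i.e. $\tilde{\phi_0}(x) = y$; applying the same reasoning to $f^{-1}$ (whose overhangs of width $r$ now land in $\Per_p(y)$) gives $\psi_0 \colon \A^p \to \A^p$ with $\tilde{\psi_0}(y) = x$.

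It remains to replace $\phi_0$ by an element of $Sym(\A^p)$. From $\tilde{\psi_0}(\tilde{\phi_0}(x)) = x$ and $\tilde{\phi_0}(\tilde{\psi_0}(y)) = y$, comparing $p$-blocks shows that $\psi_0 \circ \phi_0$ is the identity on the finite set $B_x := \{ x[kp, (k+1)p) : k \in \Z \}$ and $\phi_0 \circ \psi_0$ is the identity on $B_y := \{ y[kp, (k+1)p) : k \in \Z \}$; since moreover $\phi_0(B_x) \subseteq B_y$ and $\psi_0(B_y) \subseteq B_x$, the map $\phi_0$ restricts to a bijection $B_x \to B_y$, whence $\card{\A^p \setminus B_x} = \card{\A^p \setminus B_y}$. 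Defining $\phi$ to agree with $\phi_0$ on $B_x$ and to be any bijection $\A^p \setminus B_x \to \A^p \setminus B_y$ elsewhere yields $\phi \in Sym(\A^{p_t})$ with $\tilde\phi(x) = y$, because $\phi$ and $\phi_0$ agree on every $p$-block of $x$. I expect this second implication to be the main obstacle: the delicate points are choosing the level $p_t$ so that a general block code becomes a genuinely $p_t$-block-aligned recoding — which is exactly where the period-structure hypothesis enters, via $\Per_{p_t}(x)$ and $\Per_{p_t}(y)$ eventually absorbing the boundary positions — and then upgrading the recoding maps, which a priori are known to be mutually inverse only on the words that actually occur, to honest permutations of $\A^{p_t}$.
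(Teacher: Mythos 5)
The paper does not prove this theorem; it cites it verbatim from \cite{DKL}. So there is no internal proof to compare against, and I'll evaluate your reconstruction on its own terms.

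Your (i)$\Rightarrow$(ii) argument is correct and is essentially the content of Lemma \ref{lem:53} (Kaya's Lemma 9, which this paper also quotes). You apply Curtis--Hedlund--Lyndon, pick $t$ large enough that the block-code overhangs of width $r$ land inside $\Per_{p_t}(x)$ and $\Per_{p_t}(y)$ so that they are $k$-independent, read off a $p_t$-block recoding $\phi_0$ and a mate $\psi_0$ from $f^{-1}$, and then observe that $\phi_0$ restricts to a bijection between the sets of occurring blocks and hence extends to an element of $Sym(\A^{p_t})$. The counting step at the end ($|\A^p\setminus B_x|=|\A^p\setminus B_y|$ so any bijection on the complements works) is exactly the right fix. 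This direction is fine.

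The gap is in (ii)$\Rightarrow$(i), in the phrase ``$X$ factors continuously onto the odometer $\varprojlim_s \Z/p_s\Z$.'' This is a standard fact only when the $p_t$ are essential periods (which is how ``period structure'' is defined in \cite{DownarowiczSurvey}); the present paper's definition of a period structure asks merely that $p_t\mid p_{t+1}$ and $\bigcup_t\Per_{p_t}(x)=\Z$, and under that weaker hypothesis the maximal equicontinuous factor is $\varprojlim\Z/q_s\Z$ over the essential periods $q_s$, which may be a proper factor of $\varprojlim\Z/p_t\Z$. Concretely, if the essential periods of $x$ are $2^s$ but the chosen period structure is $(1,6,12,24,\dots)$, there is no continuous equivariant map $X\to\Z/6\Z$, so your $\pi_t$ does not exist, $s=\pi_t(z)$ is undefined, and $f(z)=\sigma^s\tilde\phi(\sigma^{-s}z)$ cannot be built as written. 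The difficulty is genuine: one really does need some form of ``$p_t$ is essential'' (or, equivalently, that $\Parts(X,p_t)$ has exactly $p_t$ distinct classes, cf.\ Lemma \ref{essential period}) for the clopen decomposition of $X$ into $\Z/p_t\Z$-indexed pieces that your construction relies on. To make the argument airtight, you should either (a) note explicitly that you may pass from $p_t$ to any larger $p_{t'}$ (extending $\phi$ blockwise to $Sym(\A^{p_{t'}})$ as you already implicitly do) and then argue that some such $p_{t'}$ can be taken essential, or (b) replace the odometer map by the partition $\Parts(X,p_t)$ and verify directly that $\sigma^{k}\tilde\phi\sigma^{-k}$ on $\overline{A(x,p_t,k)}$ is independent of the choice of representative $k$ --- which is exactly the point that needs the essential-period hypothesis. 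As written, the (ii)$\Rightarrow$(i) direction is not complete.
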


Let $K(\A^\Z)$ denote the family of compact subsets of $\A^\Z$ 
(with Vietoris topology).
On $K(\A^\Z)$ we introduce relation $D_p$, defined by 
$$
    A D_p B \Leftrightarrow 
    B = \tilde{\phi}(A) \textrm{ for some } \phi \in Sym(\A^p).
$$

\begin{lem} \label{lem:parts_Dp}
    Let $p \ge 1$ be an integer and $X, Y$ be two Toeplitz subshifts. 
    If $A \in \Parts(X, p)$, $B \in \Parts(Y, p)$ 
    are such that $A D_p B$, then $X$ and $Y$ are isomorphic.
\end{lem}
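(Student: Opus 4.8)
The plan is to leverage Lemma~\ref{lem:parts_of_toeplitz}(i), which says that the shift cyclically permutes the elements of $\Parts(X,p)$, together with Theorem~\ref{thm:DKL} applied to suitable points of $X$ and $Y$. Suppose $A \in \Parts(X,p)$ and $B \in \Parts(Y,p)$ satisfy $B = \tilde\phi(A)$ for some $\phi \in Sym(\A^p)$. First I would pick any $x \in X$; by Lemma~\ref{lem:parts_of_toeplitz}(iv) we have $A = \overline{A(x,p,k)}$ for some $0 \le k < p$, and replacing $x$ by $\sigma^k x$ we may assume $A = \overline{A(x,p,0)}$, i.e.\ $x \in A$ and $\Skel(x,p) = \Skel(A,p)$ is aligned so that $p$ divides the period structure cleanly. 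Then $\tilde\phi(x) \in \tilde\phi(A) = B \subseteq Y$, so $\tilde\phi(x) =: y$ is a point of $Y$.

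Next I would observe that $\tilde\phi$ maps the $p$-periodic structure to a $p$-periodic structure: since $\phi$ acts blockwise on windows $[kp,(k+1)p)$, the sequence $y = \tilde\phi(x)$ is again a Toeplitz sequence, and in fact $(p_t)_{t\ge 1}$ (rescaled so that $p \mid p_1$, or simply inserting $p$ into the period structure) is a period structure for $y$ as well — this is because $\tilde\phi$ commutes with $\sigma^p$, so it carries $\Per_{p_t}(x)$-periodicity of coordinates to the corresponding periodicity of $y$ once $p_t$ is a multiple of $p$. Hence $x$ and $y$ are Toeplitz sequences sharing a common period structure (say $(p_t')$ obtained from $(p_t)$ by throwing in $p$), and by construction $\tilde\phi(x) = y$ with $\phi \in Sym(\A^{p})$ and $p = p_1'$ a term of that structure. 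By Theorem~\ref{thm:DKL} (direction (ii)$\Rightarrow$(i)), there is an isomorphism of subshifts $\overline{\mathcal O(x)} \to \overline{\mathcal O(y)}$. Finally, since $X$ is a Toeplitz subshift it is minimal, so $\overline{\mathcal O(x)} = X$; likewise $y \in Y$ and $Y$ minimal gives $\overline{\mathcal O(y)} = Y$. Thus $X \cong Y$.

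The main obstacle I anticipate is the bookkeeping around period structures: Theorem~\ref{thm:DKL} is stated for $x,y$ with a \emph{common} period structure $(p_t)$ and for $\phi \in Sym(\A^{p_t})$ with $p_t$ a term of it, whereas a priori $p$ need not belong to the given period structure of either sequence, nor is it obvious that $x$ and $y$ share one. I would handle this by first noting that any finite refinement of a period structure by a single multiple-compatible integer is again a period structure (since $\Per_r \subseteq \Per_p$ whenever $r \mid p$, so enlarging by $p$ only enlarges the periodic sets and keeps the union equal to $\Z$), then checking that $\tilde\phi$ really does send $x$ to a point with the \emph{same} refined period structure — the key point being that $\tilde\phi \circ \sigma^p = \sigma^p \circ \tilde\phi$, so $i \in \Per_{kp}(x)$ forces $i \in \Per_{kp}(y)$ and conversely via $\tilde\phi^{-1} = \widetilde{\phi^{-1}}$. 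Once this alignment is in place the rest is a direct invocation of Theorem~\ref{thm:DKL} and minimality. A secondary subtlety is making sure the chosen $x \in A$ is genuinely a point whose skeleton is block-aligned with the partition $\Parts(X,p)$; Lemma~\ref{lem:parts_of_toeplitz}(ii) guarantees every element of $A$ has skeleton $\Skel(A,p)$, so any $x \in A$ works and no special choice is needed beyond the initial cyclic shift.
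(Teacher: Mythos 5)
Your proposal follows essentially the same route as the paper (choose a point $x\in A$, observe $\tilde\phi(x)\in B$, invoke Theorem~\ref{thm:DKL}, conclude by minimality), but two steps as written do not hold up.

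First, you "pick any $x\in X$" and later treat $x$ as a Toeplitz \emph{sequence}. Not every point of a Toeplitz \emph{subshift} is a Toeplitz sequence, so the choice matters: the paper takes a Toeplitz $x_0$ with $X=\overline{\mathcal O(x_0)}$ and sets $x:=\sigma^k x_0$ (shifts of Toeplitz sequences are Toeplitz), which puts $x$ into $A$. Second, your key justification for the period-structure bookkeeping — that $\tilde\phi\circ\sigma^p=\sigma^p\circ\tilde\phi$ "forces $i\in\Per_{kp}(x)\Rightarrow i\in\Per_{kp}(y)$" — is false. Because $\phi$ is a block bijection on $\A^p$, the value $y(i)$ depends on the entire $p$-block of $x$ containing position $i$, not just on $x(i)$; a single coordinate of $x$ can be $q$-periodic while the corresponding coordinate of $\tilde\phi(x)$ is not. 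For instance with $p=2$, $\phi(00)=00$, $\phi(01)=11$, and $x=\ldots0001\,0001\ldots$, position $0$ lies in $\Per_2(x)$ but not in $\Per_2(\tilde\phi(x))=\ldots0011\,0011\ldots$. The claim you actually need — that a period structure for $x$ containing $p$ is also one for $y=\tilde\phi(x)$ — is true, but the correct argument is window-wise: for each $i\in[jp,(j+1)p)$ take $p_t$ large enough that the entire block $x[jp,(j+1)p)$ is $p_t$-periodic; then that whole block of $y$ is $p_t$-periodic, so $i\in\Per_{p_t}(y)$. With those two repairs your proof matches the paper's.
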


\begin{proof}
    By Lemma \ref{lem:parts_of_toeplitz}, $A$ is clopen (and nonempty) in $X$.
    We can find a Toeplitz sequence $x \in A$
    (since $X = \overline{\mathcal{O}(x_0)}$ for some 
    Toeplitz sequence $x_0$,
    we have $\sigma^k x_0 \in A$ for some $k$, 
    so we can take $x := \sigma^k x_0$).
    Let $\phi \in Sym(\A^p)$ be such that $\tilde{\phi}(A) = B$. 
    Then $\phi(x) \in B \subseteq Y$ is also a Toeplitz sequence.
    Applying Theorem \ref{thm:DKL} we get that $X, Y$ are isomorphic.
    A different proof  
    is given in \cite{Kaya_2017}*{Lemma 10}.
\end{proof}

For finite sets $\mathscr{A}, \mathscr{B} \subset K(\A^\Z)$, Kaya introduces
one more relation
$$
    \mathscr{A} D_p^{fin} \mathscr{B}
    \Leftrightarrow
    \left\{ [A]_{D_p} \colon A \in \mathscr{A} \right\}
    =
    \left\{ [B]_{D_p} \colon B \in \mathscr{B} \right\}.
$$

\section{First proof of hyperfiniteness of the conjugacy of generalized Oxtoby subshifts} \label{6}
Fix a strictly increasing sequence $(p_t)_{t \ge 1}$ 
such that $p_t$ divides $p_{t + 1}$ for all $t$. 

\begin{lem} \label{lem:small_and_large_gap}
    Let $X$ be a generalized Oxtoby subshift 
    with respect to $(p_t)_{t \ge 1}$.
    Pick any $c \ge 1$.
    For large enough $t$,
    every $W \in \Parts_*(X, p_t)$ has either
    $len(W) \ge p_{t-1} + 2c$, or $len(W) \le p_{t-1}$.
\end{lem}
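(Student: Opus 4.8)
The plan is to pin down exactly the list of possible lengths of filled $p_t$-blocks in $\Skel(x,p_t)$, and then observe that the additive ``boundary corrections'' appearing in that list tend to infinity, because $x$ is a Toeplitz sequence.

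We may assume $x$ is aperiodic: otherwise, for $t$ large, $\Skel(x,p_t)=x$ has no blank, so $\Parts_*(X,p_t)=\emptyset$ and the statement is vacuous. Fix a generalized Oxtoby sequence $x$ (with respect to $(p_t)_{t\ge 1}$) with $X=\overline{\mathcal O(x)}$. By Lemma~\ref{lem:parts_of_toeplitz}(ii), every $W\in\Parts_*(X,p_t)$ has $\Skel(W,p_t)$ equal to a shift of $\Skel(x,p_t)$, and $len(W)$ is the length of the filled $p_t$-block of $\Skel(W,p_t)$ starting at position $0$; so it suffices to show that every filled $p_t$-block of $\Skel(x,p_t)$ has length $\le p_{t-1}$ or $\ge p_{t-1}+2c$.

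For the structural step, fix $t\ge 2$ and partition one period into the intervals $J_k:=[kp_{t-1},(k+1)p_{t-1})$, $0\le k<p_t/p_{t-1}$. The generalized Oxtoby property, applied with index $t-1$, says that on passing from $\Skel(x,p_{t-1})$ to $\Skel(x,p_t)$ each $J_k$ either becomes entirely non-blank (call it \emph{full}) or is left unchanged (call it \emph{frozen}); this all-or-nothing dichotomy is the only place the hypothesis is used. Since $x$ is aperiodic, for every $t$ each $J_k$ still contains a blank in $\Skel(x,p_{t-1})$, and not all of them are full (else $x$ would be $p_t$-periodic). Let $\eta$ and $\tau$ be the lengths of the maximal non-blank prefix and suffix of one period of $\Skel(x,p_{t-1})$; a frozen interval carries exactly these prefix and suffix runs, and aperiodicity forces $\tau+\eta<p_{t-1}$. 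Now take a filled $p_t$-block $G$ of $\Skel(x,p_t)$. If $G$ contains some $J_k$ entirely, then $G$ is the union of a maximal string $J_i,\dots,J_{i+n-1}$ of consecutive full intervals ($n\ge 1$), flanked on both sides by frozen intervals (not all intervals being full), together with the non-blank suffix of the frozen interval on its left and the non-blank prefix of the frozen interval on its right; hence $len(G)=\tau+np_{t-1}+\eta$. If $G$ contains no $J_k$ entirely, then it meets at most two consecutive intervals, both of which are then necessarily frozen, and one checks $len(G)<p_{t-1}$. So every filled $p_t$-block of $\Skel(x,p_t)$ has length $<p_{t-1}$ or of the form $\tau+np_{t-1}+\eta$ with $n\ge 1$.

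Finally, we use only that $x$ is a Toeplitz sequence: each of the finitely many positions $-c,-c+1,\dots,c-1$ lies in $\Per_{p_s}(x)$ for all large $s$, since these sets increase with $s$ and have union $\Z$. Hence, for $t$ large, all of them lie in $\Per_{p_{t-1}}(x)$; as $\Per_{p_{t-1}}(x)$ is a union of residue classes modulo $p_{t-1}$, the positions $0,\dots,c-1$ and $p_{t-1}-c,\dots,p_{t-1}-1$ are non-blank in $\Skel(x,p_{t-1})$, so $\eta\ge c$ and $\tau\ge c$, giving $\tau+\eta\ge 2c$. Fix $t$ large enough that all of the above holds. Then a filled $p_t$-block of length $<p_{t-1}$ gives $len(W)\le p_{t-1}$, while one of length $\tau+np_{t-1}+\eta$ with $n\ge 1$ gives $len(W)\ge p_{t-1}+\tau+\eta\ge p_{t-1}+2c$, which is the desired dichotomy. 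The real content is the structural claim in the third paragraph --- decomposing a non-blank run of $\Skel(x,p_t)$ along the $p_{t-1}$-intervals and correctly accounting for the two boundary runs $\tau$ and $\eta$, which is exactly where the generalized Oxtoby hypothesis is essential; granted that, the closing limiting argument is routine.
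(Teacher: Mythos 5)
Your proof is correct and follows essentially the same route as the paper's: in both, the generalized Oxtoby hypothesis forces each $p_{t-1}$-interval to be either entirely filled or to retain exactly its $p_{t-1}$-skeleton pattern inside $\Skel(x,p_t)$, and the Toeplitz property (positions in $[-c,c]$ eventually lie in $\Per_{p_{t-1}}(x)$) supplies the $2c$ margin. You package this as an explicit list of possible block lengths $\tau+np_{t-1}+\eta$, whereas the paper bounds each block directly through a three-way case split, but the underlying mechanism is the same.
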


\begin{proof}
    Pick any $x \in X$ that is 
    a generalized Oxtoby sequence w.r.t. $(p_t)_{t \ge 1}$.
    Pick $t$ large enough so that 
    $x[i + kp_{t-1}] = x[i]$ for all $k \in \Z$ and $|i| \le c$.
    Write $s := \Skel(x, p_t)$. 
    We should show that every $p_t$-filled block of $s$
    has either length $\ge p_{t-1} + 2c$ or length $\le p_{t-1}$.
    Suppose that the filled $p_t-$block is $s[l, h)$.
    One of the following cases must hold:
    \begin{itemize}
        \item the block contains an interval $[kp_{t-1}, (k+1)p_{t-1})$ for some $k$.
        Then it also contains 
        $[kp_{t-1} - c, kp_{t-1})$ and $[(k+1)p_{t-1}, (k+1)p_{t-1} + c)$ 
        because these positions were assumed to be $p_{t-1}-$periodic.
        Consequently it has length $\ge p_{t-1} + 2c$.
        
        \item the block is fully contained in 
        an interval $[kp_{t-1}, (k+1)p_{t-1})$ for some $k$. 
        Then trivially it has length at most $p_{t-1}$.

        \item if neither of previous cases hold, then $[l, r)$ 
        intersects two $p_{t-1}$ intervals, that is
        $$
            kp_{t-1} < l < (k+1)p_{t-1} < h < (k+2)p_{t-1}.
        $$
        So $s(l - 1) = \blank$, and $l-1 \in [kp_{t-1}, (k+1)p_{t-1})$,
        which implies, by definition of generalized Oxtoby sequence,
        that all $p_{t-1}$-holes in the interval $[kp_{t-1}, (k+1)p_{t-1})$
        are also $p_t$-holes. Similarly, $s(h) = \blank$ and
        $h \in [(k+1)p_{t-1}, (k+2)p_{t-1})$, 
        so all $p_{t-1}$-holes in the interval $[(k+1)p_{t-1}, (k+2)p_{t-1})$
        are also $p_t$-holes.
        Take any $p_{t-1}$ hole $j$ that 
        belongs to the interval $[kp_{t-1}, (k+1)p_{t-1})$.
        Then $j + p_{t-1}$ is a $p_{t-1}$-hole in $[(k+1)p_{t-1}, (k+2)p_{t-1})$.
        By the previous statement, both $j$ and $j + p_{t-1}$ are 
        also $p_t$-holes, meaning that $s(j) = \blank = s(j + p_{t - 1})$.
        This implies
        $$
            j < l < h < j + p_{t - 1},
        $$
        which implies $h - l \le p_{t - 1}$. \qedhere
    \end{itemize}
\end{proof}

\begin{lem} \cite[Proposition 12]{Kaya_2017} \label{lem:holes-mapped-close-to-holes}
    Let $X, Y$ be Toeplitz subshifts and $F \colon X \to Y$ a topological conjugacy.
    Let $r := |F|$, where $|F|$ is the larger of the widths of the block codes 
    $F, F^{-1}$. 
    Let $x \in X$ and let $y := F(x)$. 
    If $x(i)$ is a $p-$hole, then there exists $j$ such that $|i-j| < r$ and 
    $y(j)$ is a $p-$hole.
\end{lem}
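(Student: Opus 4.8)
The plan is to prove the contrapositive, using nothing but the block-code structure of $F^{-1}$. Fix $x \in X$, set $y := F(x)$, and fix a position $i \in \Z$. Assume that $y(j)$ is not a $p$-hole for any $j$ with $|i - j| < r$; I will show that then $x(i)$ is not a $p$-hole either, i.e. $i \in \Per_p(x)$, which is exactly the statement we need.

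First I would unwind the hypothesis. That $y(j)$ is not a $p$-hole means $j \in \Per_p(y)$, i.e. $y(j) = y(j + mp)$ for every $m \in \Z$. So the assumption says: for every $m \in \Z$ and every $\ell$ with $|\ell| < r$ one has $y(i + \ell + mp) = y(i + \ell)$. In words, the window of $y$ around $i$ of radius less than $r$ is left unchanged by every shift through a multiple of $p$.

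Next, by Curtis--Hedlund--Lyndon (Theorem \ref{thm:CHL}) the inverse conjugacy $F^{-1}\colon Y \to X$ is a block code, and by the definition of $r$ its width $w$ satisfies $w \le r$. Fix $g\colon \A^{2w+1}\to\A$ with $x(n) = g\bigl(y(n-w),\dots,y(n+w)\bigr)$ for all $n \in \Z$. Evaluating at $n = i + mp$ for an arbitrary $m$, every argument $y(i + mp + \ell)$ with $|\ell|\le w$ equals $y(i + \ell)$ by the previous step, hence $x(i + mp) = g\bigl(y(i-w),\dots,y(i+w)\bigr) = x(i)$. Since $m$ was arbitrary, $i \in \Per_p(x)$, as desired. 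A symmetric statement with the roles of $X$ and $Y$ exchanged follows in the same way from $F$ itself being a block code of width $\le r$.

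The only point I expect to require real care is the window bookkeeping: one must check that the radius-$w$ window of $F^{-1}$ centered at $i$ is contained among the positions assumed to be $p$-periodic, i.e. that the inequality $|\ell| \le w$ still lands inside the range $|\ell| < r$; this is precisely where the chosen convention for the width of a block code and for $r = |F|$ is used, and it is the source of the strict versus non-strict inequality in the statement. Conceptually there is no obstacle, and the Toeplitz hypothesis is not needed beyond guaranteeing, via Theorem \ref{thm:CHL}, that the conjugacy is a block code.
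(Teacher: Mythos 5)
Your proof is correct and takes essentially the same approach as the paper's one-line proof, which argues by contradiction rather than by contrapositive; both rest on the Curtis--Hedlund--Lyndon fact that $F^{-1}$ is a block code, so $x(i)$ is determined by a window of $y$ around $i$ which, under the negated hypothesis, lies entirely in $\Per_p(y)$. The off-by-one subtlety you flag is real but is equally present in the paper's own argument (which writes $y[i-r,i+r]$ where the hypothesis only gives $p$-periodicity on the open window), so you have simply been more explicit about the same point rather than found a new gap.
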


\begin{proof}
    Suppose not. Then $y[i-r, i+r]$ is in $p-$skeleton of $y$. 
    Due to the block code inducing $f^{-1}$, $x(i)$ is a function of $y[i-r, i+r]$.
    So $x(i)$ is also $p-$periodic, so it is not a $p-$hole. Contradiction.
\end{proof}

Let $X$ be a generalized Oxtoby subshift w.r.t. $(p_t)_{t \ge 1}$. We define
$$
\chi(X)_{p_t} := \left\{ 
    \sigma^{\lfloor \len(W)/2 \rfloor} W \colon
    W \in \Parts_*(X, p_t) \textrm{ and } 
    len(W) > p_{t-1}
    \right\}.
$$
In informal terms, 
$\chi(X)_{p_t}$ contains the elements of $\Parts(X, p_t)$ 
who have a filled $p_t$-block of length $> p_{t-1}$ centered at zero
(or to be precise, the middle of the block should be at zero if the length of 
the block is odd, and at $-1/2$ if the length of the block is even). 

\begin{lem} \cite[Lemma 9]{Kaya_2017} \label{lem:53}
    Let $X, Y$ be Toeplitz subshifts over the alphabet $\A$, 
    $x \in X$ and $y \in Y$ any points, 
    and $\pi \colon X \to Y$ an isomorphism of subshifts such that $\pi(x) = y$. 
    Suppose further that $p > 1$, $r$ is the larger of the widths of the block codes 
    $\pi, \pi^{-1}$ and that $[-r, r] \subseteq \Per_p(x), \Per_p(y)$. 
    Then there exists $\phi \in Sym(\A^p)$
    such that 
    $ y = \tilde{\phi}(x) $
    for all
    $k \in \Z$.
\end{lem}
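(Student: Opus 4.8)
\emph{Proof proposal.}

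The plan is to use the Curtis--Hedlund--Lyndon theorem to realise $\pi$ and $\pi^{-1}$ as block codes of width $r$, and then to exploit the hypothesis $[-r,r]\subseteq \Per_p(x)\cap\Per_p(y)$ to show that, for every $k$, the output block $y[kp,(k+1)p)$ is determined by the input block $x[kp,(k+1)p)$ through one and the same map $\A^p\to\A^p$; that map will then be massaged into an element of $Sym(\A^p)$.

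First I would invoke Theorem \ref{thm:CHL} to write $\pi$ as a block code and, after padding $f$ with dummy coordinates if necessary, assume it has width exactly $r$, so that $y(n)=f(x(n-r),\dots,x(n+r))$ for some $f\colon\A^{2r+1}\to\A$. Fix $k\in\Z$. Computing the block $y[kp,(k+1)p)$ requires the values of $x$ on $[kp-r,(k+1)p+r)$, that is, on the block $[kp,(k+1)p)$ together with the two ``overhangs'' $[kp-r,kp)$ and $[(k+1)p,(k+1)p+r)$. The residues modulo $p$ of the positions in these overhangs are $\{p-r,\dots,p-1\}$ and $\{0,\dots,r-1\}$ respectively, all of which occur among the residues of $[-r,r]$ modulo $p$; since $[-r,r]\subseteq\Per_p(x)$ and $\Per_p(x)$ is a union of residue classes modulo $p$, every overhang position lies in $\Per_p(x)$, hence $x$ restricted to the left and right overhangs equals the fixed words $x[-r,0)$ and $x[0,r)$, independently of $k$. (If $r\ge p$ this is immediate, since then $\Per_p(x)=\Z$.) Therefore there is a single function $g\colon\A^p\to\A^p$, assembled from $f$ and these two padding words, with $y[kp,(k+1)p)=g(x[kp,(k+1)p))$ for all $k\in\Z$. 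Running the same argument for $\pi^{-1}$, whose block code has width $\le r$, and using $[-r,r]\subseteq\Per_p(y)$, produces a function $h\colon\A^p\to\A^p$ with $x[kp,(k+1)p)=h(y[kp,(k+1)p))$ for all $k$.

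It remains to promote $g$ to a bijection. Put $S:=\{x[kp,(k+1)p)\colon k\in\Z\}$ and $S':=\{y[kp,(k+1)p)\colon k\in\Z\}$ inside $\A^p$. The two displayed identities say $g(S)=S'$, $h(S')=S$, and $h\circ g=\mathrm{id}_S$, so $g|_S\colon S\to S'$ is a bijection; since $|S|=|S'|$ we also have $|\A^p\setminus S|=|\A^p\setminus S'|$, and extending $g|_S$ by an arbitrary bijection $\A^p\setminus S\to\A^p\setminus S'$ yields $\phi\in Sym(\A^p)$ with $\phi|_S=g|_S$. Then $\tilde\phi(x)[kp,(k+1)p)=\phi(x[kp,(k+1)p))=g(x[kp,(k+1)p))=y[kp,(k+1)p)$ for every $k$, i.e. $\tilde\phi(x)=y$, which is what we want.

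I expect the one delicate point to be the index bookkeeping in the middle step: identifying exactly which residue classes the overhang positions occupy and confirming that $[-r,r]$ covers them, together with the (easy once noticed) observation that periodicity forces the padding words to be the same for every block. The conclusion that $g$ need not itself be a bijection but can always be extended to one is routine and uses only $|S|=|S'|$.
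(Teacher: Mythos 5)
The paper does not supply a proof of this lemma; it simply cites Kaya's Lemma~9. So the comparison can only be against the natural approach, and your proposal is a correct rendition of it. The key points are all in order: a block code outputs $y[kp,(k+1)p)$ from $x[kp-r,(k+1)p+r)$, and because the two overhangs occupy residues contained in $\{-r,\dots,-1\}\cup\{0,\dots,r-1\}$, they are forced by $[-r,r]\subseteq\Per_p(x)$ to equal the fixed words $x[-r,0)$ and $x[0,r)$ independently of $k$ (including in the degenerate $r\ge p$ case, where $\Per_p(x)=\Z$); this yields a single map $g\colon\A^p\to\A^p$ sending the $k$-th $x$-block to the $k$-th $y$-block. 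Running the same argument for $\pi^{-1}$ gives $h$ with $h\circ g=\mathrm{id}$ on $S=\{x[kp,(k+1)p)\}$ and $g\circ h=\mathrm{id}$ on $S'=\{y[kp,(k+1)p)\}$, so $g|_S\colon S\to S'$ is a bijection which can be extended to some $\phi\in Sym(\A^p)$ since $|\A^p\setminus S|=|\A^p\setminus S'|$. The only phrase I would tighten is the construction of $g$: spell out that $(g(w))(j)=f\bigl((x[-r,0)\cdot w\cdot x[0,r))[j,j+2r]\bigr)$, which makes visible that the sliding window always stays inside the padded word. Otherwise the argument is complete.
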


\begin{cor} \label{cor:54}
    Let $X, Y$ be Toeplitz subshifts over the alphabet $\A$, 
    and $\pi \colon X \to Y$ an isomorphism of subshifts. 
    Let $r$ be the larger of the widths of the block codes 
    $\pi, \pi^{-1}$.
    Suppose further that $p > 1$, $W \in \Parts(X, p)$, $V \in \Parts(Y, p)$,
    $\pi(W) = V$, 
    and the intervals $\Skel(W, p)[-r, r]$, $\Skel(V, p)[-r, r]$ do not contain
    a $\blank$.
    Then there exists $\phi \in Sym(\A^p)$
    such that $\tilde{\phi}|_W = \pi|_W$.
\end{cor}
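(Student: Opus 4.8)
\section*{Proof proposal}

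The plan is to apply Lemma~\ref{lem:53} at a single conveniently chosen point of $W$, obtaining a block permutation $\tilde{\phi}$ that agrees with $\pi$ there, and then to spread this agreement over the whole clopen part $W$ by a density and continuity argument.

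First I would fix any point $x \in W$. By Lemma~\ref{lem:parts_of_toeplitz} (and the claim established in its proof), $W$ is clopen and nonempty and equals $\overline{A(x, p, 0)}$; moreover every element of $W$ has $p$-skeleton $\Skel(W, p)$, so in particular $\Skel(x, p) = \Skel(W, p)$. Put $y := \pi(x)$; since $\pi(W) = V$ we get $y \in V$ and hence $\Skel(y, p) = \Skel(V, p)$. The hypothesis that $\Skel(W,p)[-r, r]$ and $\Skel(V,p)[-r, r]$ contain no blank says precisely that $[-r, r] \cap \Z \subseteq \Per_p(x)$ and $[-r, r] \cap \Z \subseteq \Per_p(y)$. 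Thus the hypotheses of Lemma~\ref{lem:53} are met (with the isomorphism $\pi$, the points $x, y$, and the parameters $p, r$), and we obtain $\phi \in Sym(\A^p)$ with $\tilde{\phi}(x) = y = \pi(x)$.

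It remains to upgrade the equality $\tilde{\phi}(x) = \pi(x)$ to $\tilde{\phi}|_W = \pi|_W$. The point is that $\tilde{\phi}$ commutes with $\sigma^p$: it acts blockwise on the $p$-blocks $x[kp, (k+1)p)$, and a shift by $p$ merely reindexes these blocks. Since $\pi$ commutes with $\sigma$, it commutes with $\sigma^p$ as well, so for every $k \in \Z$,
\[
    \tilde{\phi}(\sigma^{kp} x) = \sigma^{kp}\tilde{\phi}(x) = \sigma^{kp} y = \sigma^{kp}\pi(x) = \pi(\sigma^{kp} x).
\]
Hence $\tilde{\phi}$ and $\pi$ agree on $A(x, p, 0) = \{\sigma^{kp} x \colon k \in \Z\}$. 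Both maps are continuous: $\pi$ is a homeomorphism, and each output coordinate of $\tilde{\phi}$ is a locally constant function of finitely many input coordinates. Since $A(x, p, 0)$ is dense in $W = \overline{A(x, p, 0)}$, we conclude that $\tilde{\phi}|_W = \pi|_W$, as required.

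I do not expect a genuine obstacle here: all the substance is carried by Lemma~\ref{lem:53}, and the rest is bookkeeping. The steps that warrant a little care are the translation of the skeleton hypotheses on $W$ and $V$ into the pointwise periodicity hypothesis of Lemma~\ref{lem:53} (via Lemma~\ref{lem:parts_of_toeplitz}(ii), identifying $\Skel(x,p)$ with $\Skel(W,p)$ and $\Skel(y,p)$ with $\Skel(V,p)$), and the verification that $\tilde{\phi}$ commutes with $\sigma^p$, which is exactly what lets the density argument close.
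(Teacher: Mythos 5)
Your proposal is correct and follows essentially the same route as the paper's own proof: fix $x \in W$, translate the skeleton hypotheses into the pointwise $\Per_p$ hypotheses, invoke Lemma~\ref{lem:53} to get $\tilde{\phi}(x) = \pi(x)$, then use equivariance under $\sigma^p$ plus continuity and density of $A(x,p,0)$ in $W$ to extend the equality to all of $W$. The only difference is that you spell out the skeleton-to-periodicity translation a bit more explicitly, which is fine.
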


\begin{proof}
    Take any point $x \in W$, and let $y := \pi(x) \in \pi(W) = V$.
    We have $\Skel(x, p) = \Skel(W, p)$ and $\Skel(y, p) = \Skel(V, p)$.
    Applying lemma \ref{lem:53}, we get that 
    $y = \tilde{\phi}(x)$ for some $\phi$. 
    The map $\tilde{\phi}$ commutes with $\sigma^p$, therefore
    $\tilde{\phi}(\sigma^{pk} x) = \sigma^{pk} y$ for all $k \in \Z$.
    At the same time, we have
    $\pi (\sigma^{pk} x) = \sigma^{pk} y$ for all $k \in \Z$.
    Both $\tilde{\phi}$ and $\pi$ are continuous, and coincide on the set 
    $A(x, p, 0)$. We conclude that they must be equal on the larger domain
    $\overline{A(x, p, 0)}$ which is equal to $W$.
\end{proof}

\begin{lem} \label{lem:main}
    Let $X, Y$ be two generalized Oxtoby subshifts w.r.t. $(p_t)_{t \ge 1}$.
    The following conditions are equivalent:
    \begin{enumerate}
        \item[(i)] $X, Y$ are isomorphic,
        \item[(ii)] there exists $t_0$ such that for all $t \ge t_0$ we have
        $\chi(X)_{p_t} \; D_{p_t}^{fin} \chi(Y)_{p_t}$.
    \end{enumerate}
\end{lem}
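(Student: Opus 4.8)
\emph{Proof proposal.} I would prove the two implications separately. The implication (ii)$\Rightarrow$(i) is routine: fix a large $t$ for which $\chi(X)_{p_t}\ne\emptyset$, pick $A\in\chi(X)_{p_t}$ and, by (ii), a $B\in\chi(Y)_{p_t}$ with $A\,D_{p_t}\,B$. Since a shift of a part is again a part (Lemma \ref{lem:parts_of_toeplitz}(i)) we have $A\in\Parts(X,p_t)$ and $B\in\Parts(Y,p_t)$, so Lemma \ref{lem:parts_Dp} immediately yields $X\cong Y$. The degenerate possibility that $\chi(X)_{p_t}$ is empty for all large $t$ (equivalently, every filled $p_t$-block has length $\le p_{t-1}$) should be recorded and dealt with separately; I expect it to force $X$ into one of the trivial cases already handled, so that (ii)$\Rightarrow$(i) is not affected by it.

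For (i)$\Rightarrow$(ii), fix an isomorphism $\pi\colon X\to Y$ and set $r:=|\pi|$ (so $|\pi^{-1}|=r$ as well); by symmetry it suffices to show that, for all large $t$, every $D_{p_t}$-class represented in $\chi(X)_{p_t}$ is also represented in $\chi(Y)_{p_t}$. Applying Lemma \ref{lem:small_and_large_gap} to $X$ and to $Y$ with the constant $c:=r$, I would choose $t_0$ so that for all $t\ge t_0$ every filled $p_t$-block of $X$ or of $Y$ has length $\le p_{t-1}$ or $\ge p_{t-1}+2r$, and moreover $p_{t-1}\ge 6r$. Now take $t\ge t_0$ and $A\in\chi(X)_{p_t}$. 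Picking any $x\in A$ and using that parts are pairwise disjoint or equal, $A=\overline{A(x,p_t,0)}$, and by definition of $\chi$ the skeleton $\Skel(A,p_t)$ has, around $0$, a maximal filled block $[-\lfloor L/2\rfloor,\,L-\lfloor L/2\rfloor)$ of length $L>p_{t-1}$, hence $L\ge p_{t-1}+2r$. Put $V:=\pi(A)=\overline{A(\pi(x),p_t,0)}\in\Parts(Y,p_t)$. By Lemma \ref{lem:holes-mapped-close-to-holes} applied to $\pi$ and to $\pi^{-1}$, the $p_t$-holes of $\Skel(V,p_t)$ and of $\Skel(A,p_t)$ are within Hausdorff distance $r$ of one another; since $\Skel(A,p_t)$ is blank exactly outside the above block near $0$, this forces $\Skel(V,p_t)$ to have a maximal filled block $[a,b)\ni 0$ with $|a+\lfloor L/2\rfloor|<r$ and $|b-(L-\lfloor L/2\rfloor)|<r$, so $\ell:=b-a$ satisfies $|\ell-L|<2r$ and in particular $\ell>p_{t-1}$.

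Next set $m:=a+\lfloor\ell/2\rfloor$; then $|m|<2r$, the part $\sigma^a V\in\Parts_*(Y,p_t)$ has $\len(\sigma^a V)=\ell>p_{t-1}$, and $B:=\sigma^{\lfloor\ell/2\rfloor}(\sigma^a V)=\sigma^m V$ therefore lies in $\chi(Y)_{p_t}$ (its filled block of length $\ell$ is centered at $0$). Now I would pass to $\pi':=\sigma^m\circ\pi\colon X\to Y$, which is again an isomorphism, has $|\pi'|\le r+|m|<3r$, and satisfies $\pi'(A)=\sigma^m\pi(A)=\sigma^m V=B$. Because $L,\ell>p_{t-1}\ge 6r$, both $\Skel(A,p_t)$ and $\Skel(B,p_t)$ are blank-free on $[-3r,3r]\supseteq[-|\pi'|,|\pi'|]$, so Corollary \ref{cor:54} applies to $\pi'$ and produces $\phi\in Sym(\A^{p_t})$ with $\tilde{\phi}(A)=\pi'(A)=B$; that is, $A\,D_{p_t}\,B$ with $B\in\chi(Y)_{p_t}$. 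This gives the required inclusion and hence (ii).

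The hard part will be the skeleton-location estimate, and in particular the upper bound $|b-(L-\lfloor L/2\rfloor)|<r$, i.e.\ ruling out that the filled block $[a,b)$ of $V$ around $0$ absorbs a neighbouring block of $\Skel(Y,p_t)$: here one uses the forward direction of Lemma \ref{lem:holes-mapped-close-to-holes}, namely that a $p_t$-hole of $A$ lying just past the endpoint $L-\lfloor L/2\rfloor$ must have a $p_t$-hole of $V$ within distance $r$, which is impossible if $[a,b)$ reaches past it. A secondary concern is uniformity of the bookkeeping: each threshold ($c=r$, $|m|<2r$, $p_{t-1}\ge 6r$, $|\pi'|<3r$) must depend only on $|\pi|$ and not on $t$, so that a single $t_0$ works simultaneously for all $A\in\chi(X)_{p_t}$ and all $t\ge t_0$. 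The device of replacing $\pi$ by the shifted isomorphism $\pi'$ is exactly what lets Corollary \ref{cor:54} be invoked directly at $B$, avoiding any need to compare $V$ and $\sigma^m V$ under $D_{p_t}$.
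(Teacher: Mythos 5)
Your proposal is correct and follows essentially the same route as the paper: (ii)$\Rightarrow$(i) via Lemma~\ref{lem:parts_Dp}, and for (i)$\Rightarrow$(ii) apply Lemma~\ref{lem:small_and_large_gap} for the length dichotomy, trace the central filled block through $\pi$ using Lemma~\ref{lem:holes-mapped-close-to-holes} (in both directions, to pin down the image block to within $r$ of the original), recenter by a small shift $m$, and invoke Corollary~\ref{cor:54} for the shifted isomorphism $\sigma^m\pi$. The only differences are cosmetic bookkeeping choices (you take $c=r$ and $p_{t-1}\ge 6r$ where the paper takes $c=2r$ and $p_{t-1}>10r$; both sets of constants work with ample slack), and both you and the paper leave the nonemptiness of $\chi(X)_{p_t}$ for large $t$ unexamined.
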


\begin{proof}
    (ii $\Rightarrow$ i) 
    For large enough $t$, we have
    $\chi(X)_{p_t} \; D_{p_t}^{fin} \chi(Y)_{p_t}$ and both
    $\chi(X)_{p_t}$ and $\chi(Y)_{p_t}$ are nonempty.
    We find 
    $A \in \chi(X)_{p_t}, B \in \chi(Y)_{p_t}$ such that $A D_{p_t} B$.
    By Lemma \ref{lem:parts_Dp}, we conclude that $X, Y$ are isomorphic.

    \noindent (i $\Rightarrow$ ii)
    Let $\pi \colon X \to Y$ be the isomorphism.
    Let $r$ be the larger of the widths 
    of the block codes inducing $\pi, \pi^{-1}$. 
    By Lemma \ref{lem:small_and_large_gap}, there exists $t_0$ such that
    for all $t \ge t_0$ we have
    \begin{itemize}
        \item $p_{t - 1} > 10r$, and
        \item every $W \in \Parts_*(X, p_t)$ has either
            $len(W) \ge p_{t-1} + 4r$, or $len(W) \le p_{t-1}$,
        \item every $V \in \Parts_*(Y, p_t)$ has either
            $len(V) \ge p_{t-1} + 4r$, or $len(V) \le p_{t-1}$.
    \end{itemize}
    We will show that $\chi(X)_{p_t} \; D_{p_t}^{fin} \chi(Y)_{p_t}$.
    Take $W \in \chi(X)_{p_t}$. 
    Pick $w \in W$.
    We have $W = \sigma^{\lfloor n / 2 \rfloor} Z$ 
    for some $Z \in \Parts_*(X, p_t)$, where $n = \len(Z)$.
    By definition of $Z$,  $w$ has $p_t-$holes at 
    $l := -1 -\lfloor n / 2 \rfloor$ and $h := n - \lfloor n / 2 \rfloor$, and
    nowhere between those two indices. 
    Next, let $u := \pi(w)$.
    By Lemma \ref{lem:holes-mapped-close-to-holes}, 
    we can find $l' < 0 < h'$ such that
    $|l - l'| < r$, $|h - h'| < r$, $u$ has $p_t-$holes at $l', h'$ 
    and nowhere between those two indices.
    By definition of $\chi(X)_{p_t}$ and second bullet point, we know that 
    $$ n = \len(Z) \ge p_{t-1} + 4r, $$
    so
    \begin{equation} \label{eqn:2}
        h' - l' + 1 \ge n - 2r + 1 > p_{t-1}.
    \end{equation}

    Let $u := \pi(w)$.
    Since $w \in W$ and $W \in \Parts(X, p_t)$, we have 
    $W = \overline{A(w, p, 0)}$ and consequently
    \begin{equation} \label{eqn:3}
        \pi (W) = 
        \pi \left( \overline{ \{\sigma^{pi} w \colon i \in \Z \} } \right) = 
        \overline{ \pi \left( \{\sigma^{pi} w \colon i \in \Z \}  \right) } = 
        \overline{ \{\sigma^{pi} u \colon i \in \Z \} } = 
        \overline{A(u, p, 0)}.
    \end{equation}
    Let $j := \lceil (l' + h') / 2 \rceil$.
    We claim that $\sigma^j ( \pi (W) ) \in \chi(Y)_{p_t}$.
    By (\ref{eqn:3}) we know that $V := \sigma^j(\pi(W)) = \overline{A(u, p, j)}$
    so $\Skel(V, p_t)$ is well-defined and equal to $\Skel(\sigma^j u, p_t)$.
    We established in the previous paragraph that 
    $\Skel(u, p_t)(l') = \blank = \Skel(u, p_t)(h') $
    and $\Skel(u, p_t)(i) \neq \blank$ for $l' < i < h'$. 
    We have chosen the value of $j$ so that 
    $\sigma^j u$ has a filled $p_t$-block centered at zero. 
    This $p_t$-block has length $h' - l' + 1 > p_{t-1}$ by (\ref{eqn:2}), 
    so indeed we have $V \in \chi(Y)_{p_t}$.
    Recall that $\pi$ is given by a block code of width $\le r$, so 
    $\sigma^j \pi$ is given by a block code of width $R := \le r + |j|$. 
    We have 
    $$
        |l' + h'| \le |l' - l| + |l - h| + |h - h'| \le r + 1 + r,
    $$
    and consequently, using first bullet point,
    $$
        R \le r + |j| \le 2r + 1 \le \lfloor p_{t-1} / 2 \rfloor.
    $$
    Since $W \in \chi(X)_{p_t}$ and 
    $V \in \chi(Y)_{p_t}$
    we have that 
    $$ \Per_{p_t}(W), \Per_{p_t} \supseteq 
    [-\lfloor p_{t-1} / 2 \rfloor, \lfloor p_{t-1} / 2 \rfloor]
    \supseteq [-R, R].
    $$
    So Corollary \ref{cor:54} applies (to the isomorphism $\sigma^j \pi$)
    and we get 
    $\sigma^j \pi|_W = \tilde{\phi}|_W$
    for a suitable bijection $\phi \colon \A^{p_t} \to \A^{p_t}$.
    In particular, this means that $V = \sigma^j (\pi(W)) = \tilde{\phi}(W)$ 
    so $W D_{p_t} V$. 
    The same reasoning applies to every $W \in \chi(X)_{p_t}$, which 
    shows that 
    $\{ [W]_{p_t} \colon W \in \chi(X)_{p_t}\} 
        \subseteq 
        \{ [V]_{p_t} \colon V \in \chi(Y)_{p_t}\} 
    $. By symmetrical reasoning 
    (swapping $X$ and $Y$)
    we get the reverse inclusion, and conclude that
    \begin{equation*}
    \chi(X)_{p_t} \; D_{p_t}^{fin} \chi(Y)_{p_t}. \qedhere
    \end{equation*}
    \end{proof}

\begin{thm}
    Let $\mathcal{G}_p$ be the family of generalized Oxtoby subshifts
    w.r.t. $p=(p_t)_{t \ge 1}$ over some alphabet $\A$.
    Then the isomorphism relation $\cong$ on $\mathcal{G}_p$ is hyperfinite.
\end{thm}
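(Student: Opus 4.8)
The plan is to extract the result from Lemma~\ref{lem:main}, which exhibits $\cong$ as an ``eventual agreement'' relation. For $n\ge 1$ put
$$
    X\,R_n\,Y\iff \chi(X)_{p_t}\;D_{p_t}^{fin}\;\chi(Y)_{p_t}\ \text{ for every }t\ge n .
$$
Since $D_{p_t}^{fin}$ is by definition the pullback of equality under $\mathscr A\mapsto\{[A]_{D_{p_t}}:A\in\mathscr A\}$, it is an equivalence relation, and hence so is each $R_n$, being an intersection of equivalence relations; it is Borel since $X\mapsto\Parts(X,p_t)$ and $X\mapsto\chi(X)_{p_t}$ are Borel and $D_{p_t}$ (a finite union of graphs of the continuous maps $A\mapsto\tilde\phi(A)$, $\phi\in Sym(\A^{p_t})$) and $D_{p_t}^{fin}$ are Borel. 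Plainly $R_n\subseteq R_{n+1}$, and $\bigcup_n R_n={\cong}$ by Lemma~\ref{lem:main}. So it suffices to prove that each $R_n$ has \emph{finite} classes, once a countable invariant Borel set has been removed.

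\emph{Rigidity.} Suppose $\chi(X)_{p_t}\neq\emptyset$ and $\chi(X)_{p_t}\;D_{p_t}^{fin}\;\chi(Y)_{p_t}$. Then $\chi(Y)_{p_t}\neq\emptyset$ as well; choose $V\in\chi(Y)_{p_t}\subseteq\Parts(Y,p_t)$. By definition of $D_{p_t}^{fin}$ there are $W\in\chi(X)_{p_t}$ and $\phi\in Sym(\A^{p_t})$ with $V=\tilde\phi(W)$, and by Lemma~\ref{lem:parts_of_toeplitz} the $\sigma$-translates of $V$ exhaust $\Parts(Y,p_t)$ and cover $Y$, so
$$
    Y=\bigcup_{k=0}^{p_t-1}\sigma^k V=\bigcup_{k=0}^{p_t-1}\sigma^k\tilde\phi(W) .
$$
Hence $Y$ is determined by the pair $(W,\phi)$, and there are at most $|\chi(X)_{p_t}|\cdot|Sym(\A^{p_t})|\le p_t\cdot(|\A|^{p_t})!$ choices for it. In particular, if $\chi(X)_{p_t}\neq\emptyset$ then $\{Y:\chi(Y)_{p_t}\;D_{p_t}^{fin}\;\chi(X)_{p_t}\}$ is finite.

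\emph{Nonemptiness.} We claim $\chi(X)_{p_t}\neq\emptyset$ for infinitely many $t$ whenever $X\in\mathcal G_p$ is infinite. Fix an aperiodic generalized Oxtoby sequence $x$ with $X=\overline{\mathcal O(x)}$, and for $t$ with $0\in\Per_{p_t}(x)$ let $J_t$ be the maximal blank-free interval of $\Skel(x,p_t)$ through $0$. The $J_t$ are nested, and since $x$ is Toeplitz no position stays a blank at all levels, so $J_t$ grows strictly infinitely often and $|J_t|\to\infty$. For all large $t$ one has $\{-1,1\}\subseteq\Per_{p_t}(x)$, hence $J_t\supseteq\{-1,0,1\}$. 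Whenever $J_t$ grows to the right at step $t+1$, its former right endpoint is a $p_t$-hole filled at that step, so by the generalized Oxtoby condition the entire $p_t$-window containing it becomes blank-free in $\Skel(x,p_{t+1})$; since that window lies at or above $0$ while $J_t$ already reaches below $0$, the interval $J_{t+1}$ contains both, and thus has length $\ge p_t+1>p_t=p_{(t+1)-1}$, i.e.\ $\chi(X)_{p_{t+1}}\neq\emptyset$. The mirror statement holds for growth to the left, and $J_t$ grows on one side or the other infinitely often, proving the claim. In particular $\chi(X)_{p_t}$ can be eventually empty only if $x$ is periodic.

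\emph{Assembling.} For infinite $X\in\mathcal G_p$ and any $n$, pick $t\ge n$ with $\chi(X)_{p_t}\neq\emptyset$; by Rigidity the $R_n$-class of $X$ is finite. Thus on the Borel, $\cong$-invariant set $\mathcal G_p^{\infty}$ of infinite members of $\mathcal G_p$, each $R_n$ is a finite Borel equivalence relation and $\cong=\bigcup_n R_n$, so $\cong$ is hyperfinite there. The remaining members of $\mathcal G_p$ are periodic orbits, of which there are only countably many over the fixed alphabet $\A$; they form a countable $\cong$-invariant Borel set, on which $\cong$ is trivially hyperfinite. Since a Borel equivalence relation that is hyperfinite on each of two complementary invariant Borel sets is hyperfinite, $\cong$ on $\mathcal G_p$ is hyperfinite. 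The step I expect to require the most care is Nonemptiness: it is what upgrades ``each $R_n$ has countable classes'' to ``finite classes'', letting us invoke the definition of hyperfiniteness directly rather than the theorem that increasing unions of hyperfinite equivalence relations are hyperfinite, and it is also what isolates the periodic subshifts, for which Lemma~\ref{lem:main} is not meant to apply.
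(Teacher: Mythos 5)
Your proof is correct, and it is a genuinely different route from the one in the paper. The paper defines the purely topological relation $F = \bigcup_i F_i$ on $C^\omega$ (where $C = K(\A^\Z)^{<\omega}$), observes that each $F_i$ is smooth (because $D_{p_t}^{fin}$ is finite), uses Lemma~\ref{lem:main} to show that $X\mapsto(\chi(X)_{p_t})_t$ is a Borel reduction from $\cong$ to $F$, and then invokes the theorem that a countable hypersmooth equivalence relation is hyperfinite (citing \cite{GaoBook}, Theorem 8.1.5), the countability of $\cong$ coming from Curtis--Hedlund--Lyndon. You instead pull the relations $F_i$ back to $\mathcal G_p$ as your $R_n$ and work to show directly that each $R_n$ (after discarding the countably many periodic subshifts) is a \emph{finite} Borel equivalence relation, so that $\cong=\bigcup_n R_n$ witnesses hyperfiniteness by definition. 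This requires two extra ingredients the paper does not need: (a) the Rigidity observation that, given one nonempty $\chi(X)_{p_t}$, the relation $\chi(X)_{p_t}\,D_{p_t}^{fin}\,\chi(Y)_{p_t}$ pins $Y$ down to one of at most $\lvert\chi(X)_{p_t}\rvert\cdot\lvert Sym(\A^{p_t})\rvert$ subshifts, since $Y$ is recoverable from any single element of $\Parts(Y,p_t)$; and (b) the Nonemptiness claim that for an aperiodic generalized Oxtoby subshift, $\chi(X)_{p_t}\neq\emptyset$ for infinitely many $t$. Your argument for (b) is the nontrivial new piece -- tracking the maximal blank-free window $J_t$ through $0$ and using the defining implication of the generalized Oxtoby condition to show that whenever $J_t$ grows on one side at level $t+1$, the whole adjacent $p_t$-window gets absorbed, forcing $\lvert J_{t+1}\rvert>p_t$ -- and it is correct (it also implicitly uses that $\Skel(x,p_{t+1})$ agrees with $\Skel(V,p_{t+1})$ for some $V\in\Parts_*(X,p_{t+1})$ up to a shift, which is Lemma~\ref{lem:parts_of_toeplitz}(ii)). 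What the paper's route buys is brevity and the avoidance of (b) entirely; what yours buys is a self-contained proof that does not lean on the hypersmooth-plus-countable theorem, and it isolates the only place where periodic subshifts genuinely matter. Incidentally, your decomposition $\cong=\bigcup_n R_n$ with $R_n$ finite is closer in spirit to the paper's \emph{second} proof (Section~\ref{8}), which also exhibits the conjugacy relation as $\bigcup_T\bigcap_{t\ge T}F_t$ for finite $F_t$, though that proof uses the Oxtoby-property machinery rather than Lemma~\ref{lem:main}.
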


\begin{proof}
    Recall that $D_{p_t}$ is a finite equivalence relation on $K(\A^\Z)$.
    Let $C := K(\A^\Z)^{< \omega}$. 
    Then $D^{fin}_{p_t}$ is an equivalence relation on $C$. 
    Because $D_{p_t}$ is finite, $D^{fin}_{p_t}$ is also finite, so it is smooth.
    Let $F_i$ be an equivalence relation on $C^\omega$, defined by
    $$
        \mathscr{A} F_i \mathscr{B}  
        \; \Leftrightarrow \;
        \mathscr{A}_t D^{fin}_{p_t} \mathscr{B}_t \textrm{ for all } t \ge i.
    $$
    Let us define $F := \bigcup_{i \ge 1} F_i$. 
    Note that $F_i$ is smooth and $F_1 \subseteq F_2 \subseteq \dots$, 
    so $F$ is hypersmooth.
    By Lemma \ref{lem:main}, the map 
    $$
        \mathcal{G}_p \ni X \mapsto 
        \left( 
            \chi(X)_{p_t}
        \right)_{t \ge 1} \in C^\omega
    $$
    is a reduction from $\cong$ to $F$.
    So $\cong$ is hypersmooth, and also countable 
    (since each isomorphism is given by a block code, 
    and there are countably many of these).
    This implies that $\cong$ is hyperfinite 
    \cite[Theorem 8.1.5]{GaoBook}.
\end{proof}

\section{The Oxtoby property for subshifts} \label{7}

Recall that $(p_t)$ is a fixed sequence of natural numbers such that $p_t\mid p_{t+1}$ which will be used as a period structure of a generalized Oxtoby system.
\begin{defn}
     Let $X$ be a generalized Oxtoby system with respect to $(p_t)$ and $x\in X$.
      We say an interval $[a,a+p_t)$ is a \textbf{piece} in $x$ if for all $k>t$, the following hold:
     if $[a,a+p_t)$ contains one $p_{k}$-hole of $x$, then all $p_t$-holes of $x$ in $[a,a+p_t)$ are $p_k$-holes of $x$.
\end{defn}
Note that if $x$ is a Toeplitz sequence, then an interval $[a,a+p_t)$ is a piece in $x$ if and only if all $p_t$-holes of $x$ in $[a,a+p_t)$ have the same essential period.

\begin{defn}
    We say a sequence $x$ in a generalized Oxtoby subshift $(X,(p_t))$ has the \textbf{Oxtoby property} if there exists $T\in \mathbb{N}$ such that for all $t\geq T$ and $k\in \mathbb{Z}$, the interval $[kp_t, (k+1)p_t)$ is a piece in $x$. 
\end{defn}

Note that $x$ in the above definition might not be Toeplitz. Note also that every generalized Oxtoby sequence $z$ with respect to $(p_t)$ have the Oxtoby property since for every $t$ and $k$, the interval of the form $[kp_t,(k+1)p_t)$ is a piece in $z$.
\begin{lem} \label{Oxtoby cutting}
Let $(X,(p_t))$ be a generalized Oxtoby system, $z$ be a sequence in $X$ and $t\in \mathbb{N}$ such that for all $k\in \mathbb{Z}$, the interval $[kp_t,(k+1)p_t)$ is a piece of $z$. For every $x\in X$, let $a$ be a natural number such that $0\leq a<p_t$ and $x\in \overline{A(z,p_t,a)}$. Suppose $p_t$ is an essential period of $z$, then for every  $k\in \mathbb{Z}$, then the interval $[-a+kp_t,-a+(k+1)p_t)$ is a piece in $x$.    
\end{lem}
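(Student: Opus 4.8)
The plan is to push the piece structure of $z$ over to $x$ through the shift $a$. The point is that $x\in\overline{A(z,p_t,a)}$ forces $x$ to have, up to the shift by $a$, the same $p_t$-skeleton as $z$, and --- the slightly more delicate part --- up to a shift by some $a'_s$ with $a'_s\equiv a\pmod{p_t}$, the same $p_s$-skeleton as $z$, for every $s>t$.

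First I would note that $a$ is well defined: $X$ is minimal (being Toeplitz), so $X=\overline{\mathcal{O}(z)}$, and by Lemma~\ref{lem:parts_of_toeplitz}(iv) the sets $\overline{A(z,p_t,k)}$ for $0\le k<p_t$ partition $X$, giving existence; uniqueness is exactly Lemma~\ref{essential period}, using that $p_t$ is an essential period of $z$. Next I would establish two facts. (a) $\Skel(x,p_t)=\Skel(\sigma^a z,p_t)$; this is immediate from $x\in\overline{A(z,p_t,a)}$ and Lemma~\ref{lem:parts_of_toeplitz}(ii). (b) For each $s>t$ there is an integer $a'_s$ with $a'_s\equiv a\pmod{p_t}$ and $\Skel(x,p_s)=\Skel(\sigma^{a'_s}z,p_s)$. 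For (b), pick $n_j\equiv a\pmod{p_t}$ with $\sigma^{n_j}z\to x$ (possible since $x\in\overline{A(z,p_t,a)}$), pass to a subsequence contained in a single residue class $a'_s$ modulo $p_s$; since $p_t\mid p_s$ this class satisfies $a'_s\equiv a\pmod{p_t}$, and now $x\in\overline{A(z,p_s,a'_s)}$, so Lemma~\ref{lem:parts_of_toeplitz}(ii) applies once more. Rephrasing both facts in terms of holes: $i$ is a $p_t$-hole of $x$ iff $i+a$ is a $p_t$-hole of $z$, and $i$ is a $p_s$-hole of $x$ iff $i+a'_s$ is a $p_s$-hole of $z$.

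To finish, I would fix $k\in\mathbb{Z}$, put $J:=[-a+kp_t,-a+(k+1)p_t)$ and a level $s>t$, and write $a'_s=a+\ell p_t$. The key geometric observation is that $J+a'_s=I$ with $I:=[(k+\ell)p_t,(k+\ell+1)p_t)$, a single full $p_t$-block, hence a piece in $z$ by hypothesis. So if $J$ contains a $p_s$-hole of $x$ at $i_0$, then $i_0+a'_s\in I$ is a $p_s$-hole of $z$, and the piece property of $z$ forces every $p_t$-hole of $z$ in $I$ to be a $p_s$-hole of $z$. Then for an arbitrary $p_t$-hole $i_1$ of $x$ in $J$: by (a), $i_1+a$ is a $p_t$-hole of $z$, hence so is $i_1+a'_s$ (the set of $p_t$-holes of $z$ is invariant under translation by $p_t$, as $\Skel(z,p_t)$ is $p_t$-periodic), and $i_1+a'_s\in I$; therefore $i_1+a'_s$ is a $p_s$-hole of $z$, and by (b) $i_1$ is a $p_s$-hole of $x$. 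This is exactly the piece condition at level $s$, so letting $s$ range over all integers $>t$ shows $J$ is a piece in $x$.

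The only place where care is genuinely needed --- the main obstacle --- is the interplay between the two residues $a$ (modulo $p_t$, visible at level $t$) and $a'_s$ (modulo $p_s$, visible at level $s$): they come from different levels of the construction and are a priori unrelated beyond the forced congruence $a'_s\equiv a\pmod{p_t}$, and it is precisely this congruence that makes $J+a'_s$ a single $p_t$-block of $z$ rather than one straddling the boundary between two blocks, which is what lets us invoke the piece hypothesis for $z$. Everything else is routine translation between skeletons and holes via Lemma~\ref{lem:parts_of_toeplitz}(ii).
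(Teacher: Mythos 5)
Your proof is correct and gives a direct argument where the paper argues by contradiction. Concretely, the paper supposes $J:=[-a+kp_t,-a+(k+1)p_t)$ fails to be a piece at some first level $t_1>t$, picks any $a_1$ with $x\in\overline{A(z,p_{t_1},a_1)}$ from the clopen partition, and then has to invoke the essential-period hypothesis through Lemma~\ref{essential period} to force $a_1\equiv a\pmod{p_t}$ before it can translate the two bad holes $n_1,n_2$ over to a single $p_t$-block of $z$. You instead obtain, uniformly for every $s>t$, a representative $a'_s$ with the congruence $a'_s\equiv a\pmod{p_t}$ already built in, by pigeonholing the residues mod $p_s$ of a sequence $n_j\equiv a\pmod{p_t}$ with $\sigma^{n_j}z\to x$; the congruence then comes for free rather than via Lemma~\ref{essential period}. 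After that, the computation showing $J+a'_s$ is a single $p_t$-block and that the piece property transfers back through the skeleton identifications is the same as in the paper. A pleasant byproduct of your route is that the essential-period hypothesis is not actually load-bearing: you mention it only to make $a$ unique, but the lemma takes $a$ as given, so your proof works verbatim for any $a$ with $x\in\overline{A(z,p_t,a)}$, whereas the paper's proof as written genuinely uses the hypothesis at the step identifying $a_1\pmod{p_t}$.
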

\begin{proof}
    Suppose there exists a $k\in \mathbb{Z}$ such that $[-a+kp_t,-a+(k+1)p_t)$ is not a piece in $x$. Then there exist $n_1$, $n_2$ in $[-a+kp_t,-a+(k+1)p_t)$ and $t_1>t$ such that $n_1\in {\rm Per}_{p_{t_1}}(x)\setminus {\rm Per}_{p_{t_1-1}}(x)$ and $n_2\not \in {\rm Per}_{t_1}(x)$. 
    
    By Lemma \ref{lem:parts_of_toeplitz}, $\{\overline{A(z,p_{t_1},m)}\}_{0\leq m<p_{t_1}}$, is a partition of $X$, thus, we can find $0\leq a_1<p_{t_1}$ such that $x\in \overline{A(z,p_{t_1},a_1)}$. By the definition of $\overline{A(z,p_{t_1},a_1)}$, we know $x$ has the same $p_{t_1}$-skeleton as  $S^{a_1}x$. Now we will show that $a_1\equiv a$ (mod $p_t$). Since $p_t\mid p_{t_1}$, we have ${\rm Per}_{p_t}(x)\subset  {\rm Per}_{p_{t_1}}(x)$, thus $x$ has the same $p_{t}$-skeleton as $S^{a_1}x$, in other words $x\in \overline{A(z,p_t,a_1)}$. Since $p_t$ is an essential period of $z$, by Lemma \ref{essential period}, we have $\overline{A(z,p_t,m)}=\overline{A(z,p_t,m')}$ if and only if $m\equiv m'$ (mod $p_t$). This implies that $a_1\equiv a$ (mod $p_t$). In other words, we can write $a_1$ as $a+mp_t$ for some $m\in \mathbb{Z}$.

    Now since $x$ has the same $p_{t_1}$-skeleton as $S^{a_1}z$, we know that  
    $$
    {\rm Per}_{p_{t_1}}(x)\cap [-a+kp_t,-a+(k+1)p_t)=  {\rm Per}_{p_{t_1}}(z)\cap (a_1+[-a+kp_t,-a+(k+1)p_t))
    $$
    and 
    $$
    a_1+[-a+kp_t,-a+(k+1)p_t)=a+mp_t+[-a+kp_t,-a+(k+1)p_t)
    $$
    $$
    =[(m+k)p_t,(m+k+1)p_t).
    $$
    Thus, there will be $n_1'=n_1+a_1$ and $n_2'=n_2+a_1$ both are in $[(m+k)p_t,(m+k+1)p_t)$ such that $n_1'\in {\rm Per}_{p_{t_1}}(z)\setminus {\rm Per}_{p_{t_1-1}}(z)$ but $n_2'\not \in {\rm Per}_{p_{t_1}}(z)$. This implies that $[(k+m)p_t,(k+m+1)p_t)$ is not a piece of $z$ which contradicts the choice of $z$. 
\end{proof}

\begin{cor}\label{OC1}
    Let $(X,(p_t))$ be a generalized Oxtoby system. For every Toeplitz sequence $x\in X$ and $t\in \mathbb{N}$, there exists an $0\leq a< p_t$ such that for all $k\in \mathbb{Z}$, the interval $[-a+kp_t,-a+(k+1)p_t)$ is a piece of $x$.
\end{cor}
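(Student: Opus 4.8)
The plan is to produce the shift $a$ from an approximating sequence of shifts rather than from the partition $\Parts(X,p_t)$, and to transport the piece structure of a generalized Oxtoby generator of $X$ onto $x$. First I would fix a generalized Oxtoby sequence $z$ with respect to $(p_t)$ with $X=\overline{\mathcal{O}(z)}$ (available since $X$ is a generalized Oxtoby subshift); because $z$ is a generalized Oxtoby sequence, every interval $[kp_m,(k+1)p_m)$ is a piece of $z$. As $x\in X=\overline{\mathcal{O}(z)}$, write $x=\lim_j\sigma^{n_j}z$ and, by a diagonal argument, pass to a subsequence along which $n_j\bmod p_m$ is eventually constant, say equal to $c_m$, for every $m\ge1$; these residues are coherent, i.e.\ $c_{m'}\equiv c_m\pmod{p_m}$ whenever $m'\ge m$. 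For each $m$ this gives $x\in\overline{A(z,p_m,c_m)}$, so Lemma \ref{lem:parts_of_toeplitz}(ii) yields $\Skel(x,p_m)=\Skel(\sigma^{c_m}z,p_m)$, and since $\Skel(z,p_m)$ is $p_m$-periodic it follows that for every integer $c\equiv c_m\pmod{p_m}$ and every $i\in\mathbb{Z}$, the position $i$ is a $p_m$-hole of $x$ if and only if $i+c$ is a $p_m$-hole of $z$.

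Next I would set $a:=c_t\bmod p_t$ (so $0\le a<p_t$) and check that $I:=[-a+kp_t,-a+(k+1)p_t)$ is a piece of $x$ for every $k\in\mathbb{Z}$. Assume $I$ contains a $p_{k'}$-hole of $x$ for some $k'>t$; I must show that every $p_t$-hole of $x$ in $I$ is a $p_{k'}$-hole of $x$. Put $c:=c_{k'}$; coherence together with $p_t\mid p_{k'}$ gives $c\equiv c_t\equiv a\pmod{p_t}$, so $c-a=qp_t$ for some $q\in\mathbb{Z}$ and $I+c=[(q+k)p_t,(q+k+1)p_t)=:I'$, which is one of the level-$t$ grid intervals of $z$. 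Using the correspondence from the previous paragraph at level $k'$ directly, and at level $t$ via $c\equiv c_t\pmod{p_t}$, I obtain that for $i\in I$ the position $i$ is a $p_t$-hole (respectively a $p_{k'}$-hole) of $x$ if and only if $i+c\in I'$ is a $p_t$-hole (respectively a $p_{k'}$-hole) of $z$. In particular $I'$ contains a $p_{k'}$-hole of $z$; since $I'$ is a piece of $z$ and $k'>t$, every $p_t$-hole of $z$ in $I'$ is a $p_{k'}$-hole of $z$. Translating back by $-c$, every $p_t$-hole of $x$ in $I$ is a $p_{k'}$-hole of $x$, as needed.

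The only delicate point is the extraction of the coherent family $(c_m)_m$ and the bookkeeping that makes the single shift $c=c_{k'}$ faithfully record the hole structure of $x$ at both level $t$ and level $k'$; everything else is a direct application of Lemma \ref{lem:parts_of_toeplitz}(ii). This route avoids the essential-period hypothesis of Lemma \ref{Oxtoby cutting}, which for the present corollary would only serve to pin down $a$ from $\Parts(X,p_t)$; one could instead case-split on whether $p_t$ is an essential period of $z$ and cite Lemma \ref{Oxtoby cutting} in the essential case, but the non-essential case requires essentially the argument above anyway. I do not anticipate a genuine obstacle.
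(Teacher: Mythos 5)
Your proof is correct, and it takes a genuinely different, more elementary route than the paper's. The paper's proof picks a generalized Oxtoby generator $z$, finds $a$ with $x\in\overline{A(z,p_t,a)}$, and then invokes Lemma~\ref{Oxtoby cutting} directly — but Lemma~\ref{Oxtoby cutting} carries the hypothesis that $p_t$ is an essential period of $z$, and neither the statement nor the proof of Corollary~\ref{OC1} verifies this. (For a general period structure $(p_t)$, it can fail: if step $t$ of the construction introduced no new periodic positions, $p_t$ is not essential.) The essential-period hypothesis is used in the lemma's proof precisely to force $a_1\equiv a\pmod{p_t}$, i.e.\ to get coherence between the residue classes at level $p_t$ and at level $p_{t_1}$. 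Your diagonal extraction of a coherent family $(c_m)_m$ with $x\in\overline{A(z,p_m,c_m)}$ and $c_{m'}\equiv c_m\pmod{p_m}$ achieves this coherence by hand, with no essentiality needed, and the remaining transfer via $\Skel(x,p_m)=\Skel(\sigma^{c_m}z,p_m)$ is exactly as you describe. So what you buy is a proof of the corollary that actually matches its hypotheses; what the paper's route buys (once the gap is fixed, e.g.\ by restricting to essential $p_t$ or by inserting your coherent-residue argument into Lemma~\ref{Oxtoby cutting}) is the ability to reuse Lemma~\ref{Oxtoby cutting} as a black box elsewhere in Section~\ref{8}. One small quibble: you say the essential-period hypothesis ``would only serve to pin down $a$''; in fact in Lemma~\ref{Oxtoby cutting} it is used for the stronger conclusion $a_1\equiv a\pmod{p_t}$ via Lemma~\ref{essential period}, which is exactly the coherence your diagonal argument supplies — so the substance of your observation is right, just slightly understated.
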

\begin{proof}
    Take $z$ to be a generalized Oxtoby sequence in $X$. By Lemma \ref{lem:parts_of_toeplitz}, the set $\{\overline{A(z,p_t,a)}\}_{0\leq a <p_t}$ is a partition of $X$, thus we can find $0\leq a <p_t$ such that $x\in \overline{A(z,p_t,a)}$ and then apply Lemma \ref{Oxtoby cutting}.
\end{proof}

\begin{thm}\label{op}
    Suppose $(X_1,(p_t))$ and $(X_2,(p_t))$ are two generalized Oxtoby subshifts, $f:X_1 \rightarrow X_2$ is an isomorphism from $X_1$ to $X_2$. If $z\in X_1$ is Toeplitz and has the Oxtoby property, then $f(z)$ has the Oxtoby property.
\end{thm}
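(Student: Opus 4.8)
The plan is to show that $y:=f(z)$ is a Toeplitz sequence, that near $z$ the isomorphism $f$ is implemented by a permutation of $p_t$-blocks for all large $t$, and then to transport the Oxtoby property of $z$ through such a block permutation. That $y$ is Toeplitz is immediate: a point $x\in\A^\Z$ is Toeplitz precisely when for every $m$ there is $n\ge 1$ with $\{\sigma^{kn}x:k\in\Z\}$ contained in the cylinder determined by $x[-m,m]$, which is a purely topological–dynamical condition, so an isomorphism of subshifts carries Toeplitz points to Toeplitz points.

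Let $r:=|f|$ be the common width of the block codes inducing $f$ and $f^{-1}$ (Theorem~\ref{thm:CHL}), and let $T$ witness the Oxtoby property of $z$. Since $\bigcup_t\Per_{p_t}(z)=\Z$ and the sets $\Per_{p_t}(z)$ increase, fix $t_0$ with $[-2r,2r]\subseteq\Per_{p_{t_0}}(z)$; then for every $t\ge t_0$ we also have $[-r,r]\subseteq\Per_{p_t}(y)$, since a $p_t$-hole of $y$ in $[-r,r]$ would, by Lemma~\ref{lem:holes-mapped-close-to-holes} applied to $f^{-1}$, force a $p_t$-hole of $z$ inside $[-2r,2r]\subseteq\Per_{p_t}(z)$. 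Fix any $t\ge\max(t_0,T)$ and set $W:=\overline{A(z,p_t,0)}\in\Parts(X_1,p_t)$ and $V:=f(W)$. Because $f$ commutes with $\sigma$ one has $V=\overline{A(y,p_t,0)}\in\Parts(X_2,p_t)$, and $\Skel(W,p_t)=\Skel(z,p_t)$, $\Skel(V,p_t)=\Skel(y,p_t)$ have no $\blank$ on $[-r,r]$. Hence Corollary~\ref{cor:54} applies to $f$ and yields $\phi\in Sym(\A^{p_t})$ with $\tilde{\phi}|_W=f|_W$; in particular $y=\tilde{\phi}(z)$, and moreover $\tilde{\phi}$ agrees with the isomorphism $f$ on the whole clopen, $\sigma^{p_t}$-invariant set $W$, not just at the point $z$.

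It remains to pass from ``$y=\tilde{\phi}(z)$, $z$ has the Oxtoby property with threshold $\le t$, and $\tilde{\phi}|_W=f|_W$'' to ``$y$ has the Oxtoby property with threshold $t$''. Grouping coordinates into consecutive blocks of length $p_t$ turns $z,y$ into sequences $Z,Y$ over the alphabet $\A^{p_t}$ with $Y(k)=\phi(Z(k))$ for all $k$; thus $Y$ is $Z$ with the alphabet relabelled, so $\Per_m(Z)=\Per_m(Y)$ for every $m$. The Oxtoby property of $z$ is the coherence statement that for every $s\ge t$ all $p_s$-holes of $z$ lying in a common interval $[kp_s,(k+1)p_s)$ resolve at the same level $\min\{s':\cdot\in\Per_{p_{s'}}(z)\}$; using it one shows that a coordinate is a $p_s$-hole of $z$ exactly when it is a $p_t$-hole of $z$ and its length-$p_t$ block is an $m$-hole of $Z$ (with $m=p_s/p_t$), the resolution level being read off from $Z$ alone. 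Feeding this through $\Per_m(Z)=\Per_m(Y)$, and using that the set of $p_t$-hole coordinates is the same in every length-$p_t$ block because $\Per_{p_t}$ is $p_t$-periodic, one obtains the analogous description for $y$, hence that every $[kp_s,(k+1)p_s)$ with $s\ge t$ is a piece in $y$, which is the Oxtoby property. The main obstacle is precisely this last transfer: a permutation of $\A^{p_t}$ may mix coordinates inside a $p_t$-block, so $\tilde{\phi}$ does not respect $\Per_{p_s}$ coordinate by coordinate and could a priori produce a length-$p_t$ block of $y$ whose $p_t$-holes resolve at several distinct levels. Excluding this requires using both the within-block coherence furnished by the Oxtoby property of $z$ (all $p_t$-holes of a single block of $z$ resolve together) and the rigidity that $\tilde{\phi}$ is not an arbitrary block permutation but equals the isomorphism $f$ on all of the $\sigma^{p_t}$-invariant clopen set $W$, which pins down the behaviour of $\phi$ on the words actually occurring as length-$p_t$ blocks of $z$ and forces the holes inside each block of $y$ to resolve uniformly.
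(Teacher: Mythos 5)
Your outline gets the scaffolding right: $f(z)$ is Toeplitz, and for all large $t$ you can write $y:=f(z)=\tilde{\phi}(z)$ for some $\phi\in Sym(\A^{p_t})$, either via Theorem~\ref{thm:DKL} (as the paper does) or via Corollary~\ref{cor:54} (as you do — both are fine). You also correctly observe that a bijection $\phi$ of $\A^{p_t}$ gives $\Per_m(Z)=\Per_m(Y)$ at the block level, and that this yields a characterization of $p_s$-holes of $z$ in terms of $p_t$-holes of $z$ together with block-level $m$-holes of $Z$. And you correctly identify the crux: a block permutation can relocate holes within a $p_t$-window, so the $p_t$-holes of a block of $y$ could a priori resolve at different levels even though the $p_t$-holes of the corresponding block of $z$ resolve together.

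The gap is that you never actually close this crux; you only assert that ``the rigidity that $\tilde{\phi}$ equals $f$ on $W$ ... forces the holes inside each block of $y$ to resolve uniformly.'' That claim is not argued, and it is not the mechanism the paper uses (the paper's proof needs only $\tilde{\phi}(z)=y$, not agreement on all of $W$; the stronger agreement does not by itself constrain how $\phi$ scatters coordinates inside a single block). The relation $\Per_m(Z)=\Per_m(Y)$ pins down only the \emph{maximum} resolution level within each length-$p_t$ block of $y$; it says nothing about whether the other $p_t$-holes in that block reach the same level. To settle this, the paper invokes Corollary~\ref{OC1} to produce an offset $a<p_{t_0}$ at which the pieces of $y$ actually lie, then chooses the threshold $T$ so that the boundary misalignment caused by $a$ is absorbed into a low-essential-period piece (Claim~\ref{not left most}), and finally runs a two-case contradiction comparing the $0$-aligned blocks (whose block-level period data transfers through $\phi$) against the $a$-aligned pieces (where the Oxtoby coherence is known). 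In Case 2 this produces three consecutive $a$-aligned pieces $I_0<I_1<I_2$ whose essential periods would be $p_{t_2},p_{t_1},p_{t_2}$ with $p_{t_2}>p_{t_1}>p_t$, contradicting Corollary~\ref{OC1}. None of this appears in your proposal, and without it the ``transfer'' step is circular: the reverse implication in your hole characterization for $y$ is exactly the piece property of $y$ you are trying to establish.
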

    
\begin{proof}
    Let $(p_t)$ be the period structure of $X_1$ and $X_2$. Since $z$ has the Oxtoby property, there exists $T\in \mathbb{N}$ such that for all $t\geq T$ and $k\in \mathbb{Z}$, the interval $[kp_t,(k+1)p_t)$ is a piece. By replacing the period structure with $(p_t)_{t\geq T}$, we may assume that $z\in X_1$ is a generalized Oxtoby sequence. 
    
    By Theorem \ref{thm:DKL}, for some $t_0\in \mathbb{N}$, there exists a  $\phi\in {\rm Sym}(\A^{p_{t_0}})$ such that for all $k\in \mathbb{Z}$,
    $$
    \phi(z[kp_{t_0},(k+1)p_{t_0}))= f(z)[kp_{t_0},(k+1)p_{t_0}).
    $$

    Since $f(z)$ is in a generalized Oxtoby subshift, by Corollary \ref{OC1}, there exists $0\leq a<p_{t_0}$ such that for all $k\in \mathbb{Z}$, we have that all $p_{t_0}$-holes of $f(z)$ in $[-a+kp_{t_0},-a+(k+1)p_{t_0})$ have the same essential period. Let $p_T$ be the essential period of all $p_{t_0}$-holes of $f(z)$ in $[-a,-a+p_{t_0})$. {\color{red}}Fix $t\geq T$ and $k\in \mathbb{Z}$,  we will show that the interval $[kp_t,(k+1)p_t)$ is a piece in $f(z)$.

     Let $\alpha=\frac{p_t}{p_{t_0}}$, we have
    \begin{equation}\label{Pt0 intervals}
        [kp_t,(k+1)p_t)= \bigsqcup_{m=0}^{\alpha-1} [kp_t+mp_{t_0},kp_t+(m+1)p_{t_0}).
    \end{equation}

    
    Suppose towards a contradiction that $[kp_t,(k+1)p_t)$ is not a piece. Then there will be $p_{t_1}< p_{t_2} $ (with $t_2>t_1>t)$ such that there are $n_1,n_2 \in [kp_t,(k+1)p_t)$ such that $n_1$ and $n_2$ have essential periods $p_{t_1}$ and $p_{t_2}$ in $f(z)$ respectively. Without the loss of generality, we may assume that $p_{t_2}$ and $p_{t_1}$ are respectively the largest  and  the second largest essential period of elements in $[kp_t,(k+1)p_t)$. We can find $k_1$ and $k_2$ such that
    $$
    -a+kp_t+k_1p_{t_0}\leq n_1<-a+kp_t+(k_1+1)p_{t_0}
    $$
    and 
    $$
    -a+kp_t+k_2p_{t_0}\leq n_2<-a+kp_t+(k_2+1)p_{t_0}.
    $$
    Write
    $$
    I_1=[-a+kp_t+k_1p_{t_0},-a+kp_t+(k_1+1)p_{t_0})
    $$ 
    and 
    $$
    I_2=[-a+kp_t+k_2p_{t_0},-a+kp_t+(k_2+1)p_{t_0}).
    $$
    By the choice of $a$, it follows that all $p_{t_0}$-holes of $f(z)$ in $I_1$ have the same essential period, we know that $m_1\neq m_2$.
    \begin{claim}\label{not left most}
         The intervals $I_1$ and $I_2$ are not the leftmost or the rightmost interval of the form $[-a+lp_{t_0},-a+(l+1)p_{t_0})$ intersecting with $[kp_t,(k+1)p_t)$. 
    \end{claim}
    \begin{proof}
        By the definition of $p_T$, the interval $[-a,-a+p_{t_0})$ has essential period $p_T$ in $f(z)$.  Since $t\geq T$, both the intervals 
    $$
    [-a+kp_t,-a+p_{t_0}+kp_t)\,\, \mbox{and}\,\, [-a+(k+1)p_t,-a+p_{t_0}+(k+1)p_t)
    $$ are of the form $[-a+mp_T,-a+p_{t_0}+mp_T)$ where $m\in \mathbb{Z}$. Therefore, both intervals  
    $$
    [-a+kp_t,-a+p_{t_0}+kp_t)\,\,\mbox{and}\,\,[-a+(k+1)p_t,-a+p_{t_0}+(k+1)p_t)
    $$ have essential period $p_T$ in $f(z)$. Since the intervals $I_1$ and $I_2$ have essential period greater than $p_T$ in $f(z)$, they are not the intervals  
    $$
    [-a+kp_t,-a+p_{t_0}+kp_t) \,\,\mbox{or}\,\,
     [-a+(k+1)p_t,-a+p_{t_0}+(k+1)p_t)
    $$ 
    which are respectively the leftmost and rightmost interval of the form $[-a+lp_{t_0},-a+(l+1)p_{t_0})$ intersecting with $[kp_t,(k+1)p_t)$.
    \end{proof} 
    
    By (\ref{Pt0 intervals}), we can find $m_1$ such that
    $$
    n_1\in [kp_t+m_1p_{t_0},kp_t+(m_1+1)p_{t_0}).
    $$
    Now let 
    $$
    J=[kp_t+m_1p_{t_0},kp_t+(m_1+1)p_{t_0}).
    $$
    \textbf{Case 1}: $J$ is disjoint with ${\rm Per}_{p_{t_2}}(f(z))$. 
    Then, we can find an integer $m_2\neq m_1$ such that
    $$
    n_2\in [kp_t+m_2p_{t_0},kp_t+(m_2+1)p_{t_0})
    $$
     Now let
    $$
     M=[kp_t+m_2p_{t_0},kp_t+(m_2+1)p_{t_0}).
    $$
    Since $p_{t_2}$ is the largest essential period of elements in 
    $$
    [kp_t,(k+1)p_t),
    $$
    we know the essential period of $M_2$ 
   in $f(z)$ is $p_{t_2}$.
    
    
    Note that because by our case assumption, the interval $M$
    is disjoint with ${\rm Per}_{p_{t_2}}(f(z))$. Thus, the essential period of 
    $J$ in $f(z)$ could only be $p_{t_1}$ or lower. But $n_1\in J$ and the essential period of $n_1$ in $f(z)$ is $p_{t_1}$. Thus, the essential period of 
    $J$ in $f(z)$ is $p_{t_1}$.
    Since $\phi \in {\rm Sym}(\A^{p_{t_0}})$ is a bijection, we know that for any $l\in \mathbb{Z}$, the interval $[lp_{t_0},(l+1)p_{t_0})$ must have the same essential period in $z$ as it has in $f(z)$.  This implies
    that the essential period of 
    $J$ in $z$ is $p_{t_1}$ and the essential period of 
    $M$ in $z$ is $p_{t_2}$. Since both intervals 
    $J$ and $M$ 
    are subsets of $kp_t,(k+1)p_t)$, this implies that the interval $[kp_t,(k+1)p_t)$ contains $p_t$-holes of $z$ which have essential period $p_{t_1}$ and $p_{t_2}$. In other words, $[kp_t,(k+1)p_t)$ is not a piece in $z$. This contradicts the fact that $z$ is a generalized Oxtoby sequence w.r.t. to $(p_t)$.

\begin{figure}[h]
    \centering
    \begin{tikzpicture}

        \draw[thick] (-2,0) rectangle (7,1);
        
        \node[below] at (-2,0) {\( kp_t \)};
        \node[below] at (7,0) {\( (k+1)p_t\)};

        \def\subwidth{2} 

        \filldraw[blue!50] (-0.5,0) rectangle (-0.5+\subwidth,1);
        \node at (-0.5+\subwidth/2,0.5) {\( J \)};

        \filldraw[blue!50] (4,0) rectangle (4+\subwidth,1);
        \node at (4+\subwidth/2,0.5) {\( M \)};

       \end{tikzpicture}
    \caption{Case 1}
    \label{fig:adjacent_intervals}
\end{figure}

     \textbf{Case 2}: The interval $J$
    intersects ${\rm Per}_{p_{t_2}}(f(z))$. Possibly replacing $n_2$ with another element in ${\rm Per}_{p_{t_2}}(f(z))$, we may assume that $n_2\in J$.
    Since the roles of $n_1$ and $n_2$ are symmetric we may assume that $n_1<n_2$.

    Note that $k_1=k_2-1$ otherwise the distance between the right endpoint of $I_1$ and the left endpoint of $I_2$ is a multiple of $p_{t_0}$, but $n_1,n_2\in J$ which has length $p_{t_0}$. Note that $n_1\in J\cap I_1$, thus, the left endpoint of $I_1$ is less than the right endpoint of $J$ and the right endpoint of $I_1$ is greater than the left endpoint of $J$ which implies that
    \begin{equation}\label{E1}
        -a+kp_t+k_1p_{t_0}<kp_t+(m_1+1)p_{t_0}
    \end{equation}
      and
    \begin{equation} \label{E2}
         kp_t+m_1p_{t_0}<-a+kp_t+(k_1+1)p_{t_0}.
    \end{equation}
   
    Since $a<p_{t_0}$, (\ref{E1}) implies that 
    $$
    k_1\leq m_1+1
    $$
    and (\ref{E2}) implies that
    $$
    m_1\leq k_1.
    $$
    Thus, we have
    $$
    m_1 \leq k_1\leq m_1+1.
    $$
    Also, $n_2\in J\cap I_2$, analogous argument shows that
    $$
    m_1 \leq k_2=k_1+1 \leq m_1+1.
    $$
    Thus, $m_1=k_1$. So we have
    $$
    I_1=[-a+kp_t+m_1p_{t_0},-a+kp_t+(m_1+1)p_{t_0})
    $$ and
    $$
    I_2=[-a+kp_t+(m_1+1)p_{t_0},-a+kp_t+(m_1+2)p_{t_0})
    $$
    Let
    $$
    J_1= [kp_t+(m_1-1)p_{t_0},kp_t+m_1p_{t_0})
    $$
    \begin{claim}\label{subset}
        $J_1,J \subset [kp_t,(k+1)p_t)$.
    \end{claim}
    \begin{proof}
     Note that the leftmost interval of the form $[-a+lp_{t_0},-a+(l+1)p_{t_0})$ intersecting $[kp_t,(k+1)p_t)$ is 
    $$
    [-a+kp_t,-a+kp_t+p_{t_0}].
    $$
    By Claim \ref{not left most}, the interval $I_1$ is not the leftmost interval of the form $[-a+lp_{t_0},-a+(l+1)p_{t_0})$ intersecting $[kp_t,(k+1)p_t)$ and $I_1\subset [kp_t,(k+1)p_t)$, we have the left endpoint of $I_1$ is at least $-a+kp_t+p_{t_0}$. In other words,
    $$
    -a+kp_t+m_1p_{t_0}>-a+kp_t+p_{t_0}
    $$
    thus, $m_1\geq 1$. The rightmost interval of the form $[-a+lp_{t_0},-a+(l+1)p_{t_0})$ intersecting $[kp_t,(k+1)p_t)$ is
    $$
    [-a+(k+1)p_t,-a+(k+1)p_t+p_{t_0}).
    $$
     Now the left endpoint of $J_1$ is
     $$
      kp_t+(m_1-1)p_{t_0}\geq kp_t.
     $$
     The claim follows from the fact that $J_1<J$.

\begin{figure}[h]
    \centering

        \begin{tikzpicture}

        \draw[thick] (-2,0) rectangle (7,1);
        
        \node[below] at (-2,0) {\( kp_t \)};
        \node[below] at (7,0) {\( (k+1)p_t\)};

        \def\subwidth{2} 

        \filldraw[blue!50] (-0.5,0) rectangle (-0.5+\subwidth,1);
        \node at (-0.5+\subwidth/2,0.5) {\( J_1\)};

        \filldraw[blue!50] (-0.5+\subwidth,0) rectangle (-0.5+2*\subwidth,1);
        \node at (-0.5+1.5*\subwidth,0.5) {\( J \)};

        \draw[thick] (-0.5+\subwidth,0) -- (-0.5+\subwidth,1);

       \end{tikzpicture}

    \caption{Case 2}
    \label{fig:adjacent_intervals}
\end{figure}

    \end{proof}

    Recall that $I_1$ and $I_2$ are pieces of $f(z)$. Note that $n_1\in I_1$ and $n_1$ has essential period $p_{t_1}$. This implies that all $p_{t_0}$ holes in $I_1$ have essential period $p_{t_1}$. Note that $n_2\in I_2$ and $n_2$ has essential period $p_{t_2}$. This implies all $p_{t_0}$-holes in $I_2$ have essential period $p_{t_2}$. Note that $\phi$ preserves the essential period of an interval of the form $[lp_{t_0},(l+1)p_{t_0})$ in $z$ and $f(z)$. Since $p_{t_2}$ is the largest essential period of elements in $[kp_t,(k+1)p_t)$ in $f(z)$, we know that there will be an interval of the form $[lp_{t_0},(l+1)p_{t_0})$ with essential period $p_{t_2}$ in $z$. Since $[kp_t,(k+1)p_t)$ is a piece in $z$, all intervals of the form $[lp_{t_0},(l+1)p_{t_0})$ in $[kp_t,(k+1)p_t)$ must have essential period $p_{t_2}$ in $f(z)$. Now let 
    $$
    I_0=[-a+(m_1-1)p_{t_0},-a+m_1p_{t_0})=I_1-p_{t_0}.
    $$
    Note that $J_1\subset I_0\cup I_1$. Since $n_1$ is a $p_{t_0}$-hole of $f(z)$ in $I_1$, we have $n_1-p_{t_0}$ is a $p_{t_0}$-hole of $f(z)$ in $I_0$. The interval $J_1$ must have essential period $p_{t_2}$ in $f(z)$, but all $p_{t_0}$-holes of $f(z)$ in $I_1\cap J_1$ have essential period $p_{t_1}$ in $f(z)$. Hence there must be at least one $p_{t_0}$-hole of $f(z)$ in $I_0 \cap J_1$ which has essential period $p_{t_2}$ in $f(z)$.  Since $I_0$ is a piece of $f(z)$, all $p_{t_0}$-holes of $f(z)$ in $I_0$ have essential period $p_{t_2}$. In the end, we will have three consecutive pieces 
    $$
    I_0<I_1<I_2
    $$ of length $p_{t_0}$ such that $I_0$ and $I_2$ has essential period $p_{t_2}$ but $I_1$ has essential period $p_{t_1}$. Note that $p_{t_2}>p_{t_1}>p_t$, it is impossible to find an $0\leq b<p_t$ with $b\equiv a$ (mod $p_{t_0}$) such that for all $k\in \mathbb{Z}$, the interval $[-b+kp_t,-b+(k+1)k_{p_t})$ have the Oxtoby property which contradicts Corollary \ref{OC1}.
    \end{proof}


    
\section{Second proof of hyperfiniteness of conjugacy of generalized Oxtoby subshifts} \label{8}
\begin{defn} \label{ER}
    Let $T\in \mathbb{N}$. We define the relation $F_T$ on the space of generalized Oxtoby systems with respect to $(p_t)$ as follows: $XF_TY$ if there exists $W\in {\ Parts}(X,p_T)$ and $V\in {\ Parts}(Y,p_T)$ such that
    \begin{enumerate}
        \item $\forall x\in W \forall y\in V$ and $k\in \mathbb{Z}$, the interval $[kp_T,(k+1)p_T)$ is a piece of $x$ and $y$.
        \item  There exists $\tilde \phi \in {\rm Sym}(\A^{p_T})$ such that $\phi(W)= V$.
    \end{enumerate}
\end{defn}
\begin{lem}\label{ANY DEF}
         Let $X$ and $Y$ be generalized Oxtoby systems w.r.p. to $(p_t)$. Then $XF_TY$ if and only if there exists $x\in X$, $y\in Y$ such that
         \begin{itemize}
             \item  the interval $[kp_T,(k+1)p_T)$ is a piece for both $x$ and $y$,
             \item  there exists $\phi \in {\rm Sym}(\A^{p_T})$ such that  we have $y=\tilde \phi (x)$.
         \end{itemize}
    \end{lem}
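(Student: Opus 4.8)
The plan is to prove the two implications separately, using Lemma~\ref{lem:parts_of_toeplitz} as a dictionary between the ``parts'' appearing in Definition~\ref{ER} and individual points of the subshifts. The forward implication is essentially immediate; all the content is in the converse, and within it in a single propagation fact: if one point of a part $W\in\Parts(X,p_T)$ has the property that every interval $[kp_T,(k+1)p_T)$ is a piece, then so does every point of $W$.

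For $XF_TY\Rightarrow(\ast)$, where $(\ast)$ denotes the right-hand condition of the lemma, let $W\in\Parts(X,p_T)$, $V\in\Parts(Y,p_T)$ and $\phi\in{\rm Sym}(\A^{p_T})$ be as in Definition~\ref{ER}, so in particular $\tilde\phi(W)=V$. Parts are nonempty (a part $\overline{A(x_0,p_T,k_0)}$ contains $\sigma^{k_0}x_0$), so pick $x\in W$ and set $y:=\tilde\phi(x)$. Then $y\in\tilde\phi(W)=V$; condition~(1) of Definition~\ref{ER} gives that $[kp_T,(k+1)p_T)$ is a piece of $x$ (since $x\in W$) and of $y$ (since $y\in V$) for every $k$; and $y=\tilde\phi(x)$ by construction. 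So $(\ast)$ holds for this $x,y$.

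For the converse, suppose $x\in X$ and $y\in Y$ satisfy $(\ast)$. Put $W:=\overline{A(x,p_T,0)}$ and $V:=\overline{A(y,p_T,0)}$; by Lemma~\ref{lem:parts_of_toeplitz}(iv), applied with basepoint $x$, resp.\ $y$, these lie in $\Parts(X,p_T)$, resp.\ $\Parts(Y,p_T)$, and are subsets of $X$, resp.\ $Y$. Condition~(2) of Definition~\ref{ER} is easy: $\tilde\phi$ is a homeomorphism of $\A^\Z$ that commutes with $\sigma^{p_T}$, so $\tilde\phi(A(x,p_T,0))=A(\tilde\phi(x),p_T,0)=A(y,p_T,0)$, and taking closures yields $\tilde\phi(W)=V$. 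What remains, condition~(1), is to show that every point of $W$, and symmetrically of $V$, has every interval $[kp_T,(k+1)p_T)$ a piece; this is the only real obstacle.

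To prove it, fix $x'\in W$, an index $k\in\Z$ and a level $m>T$. From $p_T\mid p_m$ one obtains the set-theoretic decomposition $A(x,p_T,0)=\bigcup\{A(x,p_m,b):0\le b<p_m,\ p_T\mid b\}$, and taking closures of this finite union gives $W=\bigcup\{\overline{A(x,p_m,b)}:0\le b<p_m,\ p_T\mid b\}$; hence $x'\in\overline{A(x,p_m,b)}$ for some $b$ with $p_T\mid b$. By Lemma~\ref{lem:parts_of_toeplitz}(ii), $\Skel(x',p_m)=\Skel(\sigma^b x,p_m)$ and $\Skel(x',p_T)=\Skel(x,p_T)$; concretely, a position $n$ is a $p_m$-hole of $x'$ iff $n+b$ is a $p_m$-hole of $x$, and --- since $\Skel(x,p_T)$ is $p_T$-periodic and $p_T\mid b$ --- $n$ is a $p_T$-hole of $x'$ iff $n+b$ is a $p_T$-hole of $x$. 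Translating the block $[kp_T,(k+1)p_T)$ by the multiple $b$ of $p_T$ (which maps $p_T$-aligned blocks to $p_T$-aligned blocks) therefore identifies the $p_T$-hole pattern and the $p_m$-hole pattern of $x'$ on $[kp_T,(k+1)p_T)$ with those of $x$ on $[kp_T+b,(k+1)p_T+b)$; consequently the defining clause of ``piece'' for level $m$ holds for $x'$ on $[kp_T,(k+1)p_T)$ exactly when it holds for $x$ on the translated $p_T$-aligned block, and the latter holds because $x$ has every $p_T$-aligned block a piece. Letting $m$ and $k$ vary shows $x'$, hence every point of $W$, has the required property, and the same argument with $y$, $V$ in place of $x$, $W$ establishes condition~(1), completing the proof.
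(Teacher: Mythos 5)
Your proof is correct and follows the same overall strategy as the paper: for the converse one takes $W:=\overline{A(x,p_T,0)}$ and $V:=\overline{A(y,p_T,0)}$ and shows that the piece property propagates from the single point $x$ (resp.\ $y$) to every point of $W$ (resp.\ $V$). Where you differ is in how that propagation is established. The paper simply invokes Lemma~\ref{Oxtoby cutting} with $a=0$; you instead prove it directly by decomposing $W=\bigcup\{\overline{A(x,p_m,b)}:0\le b<p_m,\ p_T\mid b\}$ and using Lemma~\ref{lem:parts_of_toeplitz}(ii) to match the $p_T$-hole and $p_m$-hole patterns of $x'$ on a $p_T$-aligned block with those of $x$ on the $p_T$-aligned block translated by $b$. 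This is worth noting because Lemma~\ref{Oxtoby cutting} carries the hypothesis that $p_T$ is an essential period of the basepoint, and the data in Lemma~\ref{ANY DEF} does not obviously supply that for $x$; your decomposition produces a shift $b$ with $p_T\mid b$ directly, so the essential-period assumption is never needed in the $a=0$ case. In effect you give a self-contained proof of the restricted form of Lemma~\ref{Oxtoby cutting} that the paper actually uses, which makes the argument more robust. You also explicitly verify condition (2) of Definition~\ref{ER}, namely that $\tilde\phi(W)=V$ via commutation of $\tilde\phi$ with $\sigma^{p_T}$, a step the paper leaves implicit.
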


    \begin{proof}
        ($\Rightarrow$) This is by the definition of $F_T$, one can take any $x\in W$ and $y=\tilde \phi (x)\in V$.
        
        ($\Leftarrow$) Take $W$ to be $\overline{A(x,p_T,0)}$ and $V$ to be  $\overline{A(y,p_T,0)}$, by Lemma \ref{Oxtoby cutting} applied with $a=0$, we know for any $x'\in \overline{A(x,p_T,0)}$ and any $y'\in \overline{A(y,p_T,0)}$ and all $k\in \mathbb{Z}$, the interval $[kp_T,(k+1)p_T)$ is a piece of $x'$ and $y'$. Thus, we have $XF_TY$. 
    \end{proof}
\begin{lem}\label{Borel EQ}
    For any $T\in \mathbb{N}$, the relation $F_T$ is a Borel set.
\end{lem}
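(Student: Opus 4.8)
The plan is to use the reformulation given by Lemma \ref{ANY DEF}: $X \mathrel{F_T} Y$ iff there exist $x \in X$, $y \in Y$ with $y = \tilde\phi(x)$ for some $\phi \in \mathrm{Sym}(\A^{p_T})$, and such that $[kp_T,(k+1)p_T)$ is a piece in both $x$ and $y$ for all $k \in \Z$. The space of generalized Oxtoby subshifts sits inside the standard Borel space $K(\A^\Z)$ of compact subsets of $\A^\Z$ (with the Vietoris/Effros structure). I will write $F_T$ as a countable union (over the finitely many $\phi \in \mathrm{Sym}(\A^{p_T})$) of sets, and show each is Borel by exhibiting it as a projection that is in fact Borel because the relevant ``witness'' sets are closed.

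First I would observe that for a fixed $\phi$, the set
$$
R_\phi := \{(X, Y, x, y) : x \in X,\ y \in Y,\ y = \tilde\phi(x)\}
$$
is a Borel (indeed closed-in-the-fibers) subset of $K(\A^\Z) \times K(\A^\Z) \times \A^\Z \times \A^\Z$: membership $x \in X$ is a standard Borel condition on $K(\A^\Z)$, the map $x \mapsto \tilde\phi(x)$ is continuous, and $y = \tilde\phi(x)$ is closed. Next, the ``piece'' condition is a closed condition on the point: for fixed $t$ and $k$, the requirement that $[kp_T, (k+1)p_T)$ be a piece in $x$ is, by definition, a conjunction over all $k' > T$ of implications between membership of specified coordinates in $\mathrm{Per}_{p_{k'}}$, and each such statement depends only on countably many coordinates of $x$ in a way that is Borel (each $\mathrm{Per}_{p_{k'}}(x)$ is determined coordinatewise by equalities $x(i) = x(i + mp_{k'})$, which are clopen). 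So the set $P$ of $x$ for which every $[kp_T,(k+1)p_T)$ is a piece is Borel (a countable intersection of Borel sets), and likewise for $y$.

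Then $X \mathrel{F_T} Y$ holds iff
$$
\exists \phi\ \exists x\ \exists y\ \bigl[(X,Y,x,y) \in R_\phi \ \wedge\ x \in P \ \wedge\ y \in P\bigr].
$$
Naively this is a $\boldsymbol{\Sigma}^1_1$ statement, so the main obstacle is to remove the projection over $x, y$ and conclude the result is Borel rather than merely analytic. The key point that resolves this: since $X$ is a Toeplitz subshift, by Lemma \ref{lem:parts_of_toeplitz} the clopen pieces $\Parts(X, p_T)$ are finite in number and depend Borel-measurably on $X$ (each $\overline{A(x,p_T,k)}$ is clopen, and one can select a point in each part in a Borel way, e.g.\ via a Borel uniformization of the relation ``$x \in X$'' using that $X$ is compact and the selection of a Toeplitz point is possible coordinate-by-coordinate). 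Concretely, I would invoke that there is a Borel map $X \mapsto (W_0, \dots, W_{p_T - 1})$ enumerating $\Parts(X, p_T)$, and note that whether $[kp_T,(k+1)p_T)$ is a piece for the elements of a given part $W_i$ depends only on $\Skel(W_i, p_T)$ together with the ``local essential period'' data, which by the remark after the definition of \emph{piece} is determined by $W_i$ as an element of $K(\A^\Z)$ in a Borel fashion. Thus the existential quantifier over $x$ can be replaced by a finite disjunction over the finitely many parts $W_i$, and similarly for $y$; after this replacement $F_T$ becomes a finite union of Borel sets, hence Borel. I expect the bookkeeping for ``the piece/essential-period data of a part is a Borel function of the part in $K(\A^\Z)$'' to be the technically fiddly step, but it is routine given Lemma \ref{lem:parts_of_toeplitz} and Lemma \ref{essential period}.
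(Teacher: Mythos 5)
Your proposal is correct, but it takes a noticeably different and heavier route than the paper. The paper's proof is a two-line application of Souslin's theorem: Definition \ref{ER} presents $F_T$ with a universal quantifier over $x\in W$, $y\in V$ (ranging over the Polish space $\A^\Z$), so $F_T$ is co-analytic; Lemma \ref{ANY DEF} presents the same relation with an \emph{existential} quantifier over $x,y$ together with a Borel inner condition (the piece condition and $y=\tilde\phi(x)$), so $F_T$ is analytic; being simultaneously $\boldsymbol\Sigma^1_1$ and $\boldsymbol\Pi^1_1$, it is Borel. No selection theorem is needed. You instead identify the $\boldsymbol\Sigma^1_1$ form and then try to upgrade it directly to Borel by replacing the existential over $x,y$ with a finite disjunction over $\Parts(X,p_T)$ and $\Parts(Y,p_T)$. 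That is sound in principle: Lemma \ref{Oxtoby cutting} guarantees that the piece condition is constant across each part, so searching over parts suffices, and a Borel selector of a point from a nonempty compact subset of $\A^\Z$ (e.g.\ lexicographic minimum) is routine. The payoff of your route is a more explicit Borel description of $F_T$, which could be useful if one wanted effective bounds on its complexity; the cost is exactly the ``technically fiddly'' bookkeeping you flag, all of which the paper sidesteps by giving both an upper and a lower descriptive-set-theoretic bound and invoking Souslin. If you keep your approach, you should make the reduction to parts explicit (in particular cite Lemma \ref{Oxtoby cutting} for the uniformity of the piece condition over a part) rather than gesturing at ``essential-period data of a part being Borel.''
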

\begin{proof}
    The definition is clearly co-analytic. By Lemma \ref{ANY DEF} it is also analytic.
    \end{proof}

\begin{lem}\label{EQ}
         For any $T\in \mathbb{N}$, the relation $F_T$ is a finite equivalence relation.
    \end{lem}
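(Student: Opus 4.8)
The plan is to show that $F_T$ is reflexive, symmetric, transitive, and has finitely many classes, using the equivalent formulation from Lemma \ref{ANY DEF}.

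\textbf{Reflexivity.} Given a generalized Oxtoby system $X$ w.r.t. $(p_t)$, pick a generalized Oxtoby sequence $x\in X$. Since $x$ is a generalized Oxtoby sequence, the interval $[kp_T,(k+1)p_T)$ is a piece for $x$ for every $k$. Taking $y=x$ and $\phi=\mathrm{id}$ in Lemma \ref{ANY DEF} gives $X F_T X$.

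\textbf{Symmetry.} If $X F_T Y$, take $x,y,\phi$ as in Lemma \ref{ANY DEF}, so $[kp_T,(k+1)p_T)$ is a piece for both $x$ and $y$ and $y=\tilde\phi(x)$. Then $\phi^{-1}\in\mathrm{Sym}(\A^{p_T})$ and $x=\widetilde{\phi^{-1}}(y)$, so the same lemma (with the roles of $x,y$ swapped) yields $Y F_T X$.

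\textbf{Transitivity.} Suppose $X F_T Y$ and $Y F_T Z$. From the first relation, via Lemma \ref{ANY DEF}, there are $x\in X$, $y\in Y$ and $\phi\in\mathrm{Sym}(\A^{p_T})$ with every $[kp_T,(k+1)p_T)$ a piece for $x$ and $y$, and $y=\tilde\phi(x)$. From the second relation there are $y'\in Y$, $z\in Z$ and $\psi\in\mathrm{Sym}(\A^{p_T})$ with every $[kp_T,(k+1)p_T)$ a piece for $y'$ and $z$, and $z=\tilde\psi(y')$. The issue is that $y$ and $y'$ need not be equal; I would reconcile them using the structure of $\Parts(Y,p_T)$ and Lemma \ref{Oxtoby cutting}. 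Concretely, $y'\in\overline{A(z_Y,p_T,a)}$ for a generalized Oxtoby sequence $z_Y\in Y$ and some $0\le a<p_T$; since every $[kp_T,(k+1)p_T)$ is a piece for $y'$, and since $z_Y$ is a generalized Oxtoby sequence (so $p_T$ divides into its essential periods appropriately), one checks using Lemma \ref{essential period} / Lemma \ref{Oxtoby cutting} that in fact $a\equiv 0$, i.e. $y'\in\overline{A(z_Y,p_T,0)}$, and similarly $y\in\overline{A(z_Y,p_T,0)}$ — in other words $y$ and $y'$ lie in the same part $W_Y:=\overline{A(z_Y,p_T,0)}=\Skel^{-1}$-class. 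Both $\tilde\phi(\,\cdot\,)$ restricted to $\overline{A(x,p_T,0)}$ and the map realizing $y'$ align on the orbit $\{\sigma^{kp_T}\}$, so composing through Definition \ref{ER} I obtain $W\in\Parts(X,p_T)$, $V\in\Parts(Z,p_T)$ with $(\psi\circ\phi)(W)=V$ and the piece condition holding on both (the piece condition for $x$ and $z$ was already in hand; for the common part $W_Y$ the two relations agree). Hence $X F_T Z$.

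\textbf{Finiteness.} Each class of $F_T$ is determined, by Definition \ref{ER}, by the $D_{p_T}$-class of a part $W\in\Parts(X,p_T)$ satisfying the piece condition; since there are only finitely many bijections in $\mathrm{Sym}(\A^{p_T})$ and finitely many words in $\A^{p_T}$ (hence finitely many possible $p_T$-skeletons on a period), there are only finitely many $D_{p_T}$-classes among such parts, so $F_T$ has finitely many equivalence classes. The main obstacle I anticipate is the reconciliation step in transitivity: showing that the witnesses $y$ and $y'$ in $Y$ can be taken in the same part of $\Parts(Y,p_T)$, which is exactly where the Oxtoby/piece hypothesis together with Lemma \ref{Oxtoby cutting} and Lemma \ref{essential period} must be invoked carefully to rule out a nonzero shift $a$.
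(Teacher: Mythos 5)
Your reflexivity, symmetry, and finiteness arguments are correct and match the paper. The problem is in transitivity, and it is exactly at the step you yourself flagged as the main obstacle.

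You claim that because every $[kp_T,(k+1)p_T)$ is a piece for $y'$ and also, by Lemma \ref{Oxtoby cutting}, every $[-a+kp_T,-a+(k+1)p_T)$ is a piece for $y'$, one can conclude $a\equiv 0\pmod{p_T}$. This is false. For instance, if the $p_T$-skeleton of $y'$ has a single $p_T$-hole in each period (say at residue $2$ modulo $p_T=4$), then the intervals $[kp_T,(k+1)p_T)$ and $[-1+kp_T,-1+(k+1)p_T)$ are both pieces, yet $a=1\neq 0$. What \emph{is} true — and this is the content of the paper's Claim \ref{Inter} together with the observation that a non-trivial Oxtoby system has no periodic points — is that with $I_k=[kp_T,-a+(k+1)p_T)$ and $J_k=[-a+(k+1)p_T,(k+1)p_T)$, one of $\bigcup_k I_k$ or $\bigcup_k J_k$ must be contained in $\Per_{p_T}(y')$, hence $y'$ restricted to that family of sub-intervals is a constant word. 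The paper then does \emph{not} eliminate the shift: it uses this constancy to define a new map $\phi(y'[kp_T,(k+1)p_T)) := \phi_1(y'[-a+(k+1)p_T,-a+(k+2)p_T))$, verifying well-definedness via that constancy, and composes it with the $\phi_2$ relating $y'$ and $z$. Your route of forcing $a=0$ cannot be completed; you would need to replace it with a construction of a shifted $\phi$ along these lines.
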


\begin{proof}
     Note that there are finitely many elements in both ${\rm Parts}(X,(p_T))$ and ${\rm Sym}(\A^{p_T})$, thus $F_T$ is finite.
    
    Now we prove that $F_T$ is an equivalence relation. Reflexivity and symmetry are obvious. We will check transitivity. Let $X F_T Y$ and $Y F_T Z$. By Lemma \ref{ANY DEF}, there exists $x\in X$ and $y\in Y$ such that both items in Lemma \ref{ANY DEF} are satisfied. Also, there exists $y'\in Y$ and $z\in Z$ such that both items in Lemma \ref{ANY DEF} are satisfied for $y'$ and $z$. We will show that both items in Lemma \ref{ANY DEF} are also satisfied for some $x'\in X$ and $z$, which will prove transitivity.

    Now let $0\leq a<p_T$ be the natural number such that $y'\in \overline{A(y,p_T,a)}$. By Lemma \ref{Oxtoby cutting}, we have that for all $k\in \mathbb{Z}$, both intervals $[kp_T,(k+1)p_T)$ and $[-a+kp_T,-a+(k+1)p_T)$ are pieces in $y'$. Now let 
    $$
    I_k=[kp_T,-a+(k+1)p_T]\,\,\mbox{and}\,\,
    J_k=[-a+(k+1)p_T,(k+1)p_T].
    $$
    \begin{claim}\label{Inter}
        Either $\bigcup_{k\in \mathbb{Z}} I_k\subset {\rm Per}_{p_T}(y')$ or $\bigcup_{k\in \mathbb{Z}} J_k\subset {\rm Per}_{p_T}(y')$.
    \end{claim}
    \begin{proof}
         We assume both $\bigcup_{k\in \mathbb{Z}} I_k$ and $\bigcup_{k\in \mathbb{Z}} J_k$ contain a $p_T$-hole, this will lead to a contradiction. Note that $I_{k+1}=I_k+p_T$, thus if one of the intervals $I_k$ contains a $p_T$-hole, then all of intervals $I_k$ contain a $p_T$-hole. Also, $J_{k+1}=J_k+p_T$, thus if one of the intervals $J_k$ contains a $p_T$-hole, then all of intervals $J_k$ contain a $p_T$-hole

        Since $I_k\cup J_k=[kp_T,(k+1)p_T)$ which is a piece in $y'$, we know that all $p_T$-holes in $I_k\cup J_k$ have the same essential period. Since $I_k\cup J_{k-1}= [-a+(k-1)p_T,-a+kp_T)$ which is also a piece in $y'$, all $p_T$-holes in $I_k\cup J_{k-1}$ have the same essential period. Keep doing this process in both directions and we get that all $p_T$-holes in the whole bi-infinite sequence $y'$ have the same essential period. This implies that $y'$ is periodic which a contradiction since there is no periodic point in a non-trivial Oxtoby system.
    \end{proof}
    \begin{claim}
        One of the following holds:
        \begin{itemize}
            \item for all $k\in \mathbb{Z}$, the sequence $y'$ restricted on $I_k$ is the same finite sequence.
            \item for all $k\in \mathbb{Z}$, the sequence $y'$ restricted on $J_k$ is the same finite sequence.
        \end{itemize}
    \end{claim}
    \begin{proof}
        By Claim \ref{Inter}, we know that either $\bigcup_{k\in \mathbb{Z}} I_k\subset {\rm Per}_{p_T}(y')$ or $\bigcup_{k\in \mathbb{Z}} J_k\subset {\rm Per}_{p_T}(y')$. If $\bigcup_{k\in \mathbb{Z}} I_k\subset {\rm Per}_{p_T}(y')$, since $I_k=I_{k-1}+p_T$, we know $I_k=I_{k-1}$ for all $k$, thus the sequence $y'$ restricted on $I_k$ is the same finite sequence. Similarly, when $\bigcup_{k\in \mathbb{Z}} J_k\subset {\rm Per}_{p_T}(y')$, we have for all $k\in \mathbb{Z}$, the sequence $y'$ restricted on $J_k$ is the same finite sequence.
    \end{proof}
   Without loss of generality, we may assume that for all $k\in \mathbb{Z}$, the sequence $y'$ restricted on $I_k$ is the same finite sequence.
   By the choice of $x$ and $y$, we know that there is a $\phi_1\in {\rm Sym}(\mathcal{A}^{p_T})$ such that for all $k
\in \mathbb{Z}$, we have 
   $$
   \phi_1(y[kp_T,(k+1)p_T))=x[kp_T,(k+1)p_T).
   $$

    We define $\phi\in {\rm Sym}(\mathcal{A}^{p_T})$ as follows:
    $$
    \phi(y'[kp_T,(k+1)p_T))=\phi_1(y'[-a+(k+1)p_T,-a+(k+2)p_T)).
    $$
    \begin{claim}
        $\phi$ is well-defined.
    \end{claim}
    \begin{proof}
         Suppose 
         \begin{equation} \label{CON}
             y'[kp_T,(k+1)p_T)=y'[k'p_T,(k'+1)p_t)
         \end{equation}
         we need to show that
         $$
         \phi(y'[(k+1)p_T,(k+2)p_T))=\phi(y'[(k'+1)p_T,(k'+2)p_t)).
         $$
         Note that $[kp_T,(k+1)p_T)=I_k\cup J_k$ and $[k'p_T,(k'+1)p_t)=I_{k'}\cup J_{k'}$. Since $y'$ restricted on $I_k$ is the same sequence for every $k$ and we know 
         \begin{equation} \label{CON1}
             y'[(k+1)p_T,-a+(k+2)p_T)=y'[(k'+1)p_T,-a+(k'+2)p_t)
         \end{equation}
         By (\ref{CON}), we know that 
         $$
             y'[-a+(k+1)p_T,(k+1)p_T)=y'[-a+(k'+1)p_T,(k'+1)p_t)
        $$
       which together with (\ref{CON1}) implies
         $$
         y'[-a+(k+1)p_T,-a+(k+2)p_t]=y'[-a+(k'+1)p_T,-a+(k'+2)p_t].
        $$
         Thus we have 
        $$
     \phi_1(y'[-a+(k+1)p_T,-a+(k+2)p_T))= \phi_1 (y'[-a+(k'+1)p_T,-a+(k'+2)p_t))
        $$
        which implies that
        $$
        \phi(y'[kp_T,(k+1)p_T))=\phi(y'[k'p_T,(k'+1)p_t)).
        $$
    \end{proof}
     By the choice of $y'$ and $z$, we know there is a $\phi_2\in {\rm Sym}(\mathcal{A}^{p_T})$ such that for all $k
\in \mathbb{Z}$, we have 
$$
        \phi_2(y'[kp_T,(k+1)p_T))=x[kp_T,(k+1)p_T)
 $$   
     The composition of $\phi_2^{-1}$ and $\phi$ is in ${\rm Sym}(\mathcal{A}^{p_T})$ such that $z$ and $\phi\circ \phi_2^{-1}(z)$ satisfying both conditions.

\end{proof}
\begin{lem}\label{End Game}
    Given two generalized Oxtoby subshifts $X, Y$, the following are equivalent
    \begin{enumerate}
    
        \item $X$ and $Y$ are conjugate.
        \item There is $T\in \mathbb{N}$ such that for all $t\geq T$, we have $XF_tY$.
    \end{enumerate}
\end{lem}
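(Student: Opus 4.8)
I would prove the two implications separately; almost everything reduces to results already established. For $(2)\Rightarrow(1)$ only a single value of $t$ is needed: if $XF_TY$, then by Definition \ref{ER} there are $W\in\Parts(X,p_T)$, $V\in\Parts(Y,p_T)$ and $\phi\in{\rm Sym}(\A^{p_T})$ with $\tilde\phi(W)=V$, i.e.\ $W\,D_{p_T}\,V$; Lemma \ref{lem:parts_Dp} then gives immediately that $X$ and $Y$ are isomorphic, hence conjugate. So in this direction condition (2) is far stronger than necessary.

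The content is in $(1)\Rightarrow(2)$. Let $\pi\colon X\to Y$ be a conjugacy, and fix a generalized Oxtoby sequence $z$ with $X=\overline{\mathcal{O}(z)}$. Since conjugacies carry orbit closures to orbit closures, $Y=\overline{\mathcal{O}(\pi(z))}$, and exactly as in the proof of Theorem \ref{op} one may treat $\pi(z)$ as a Toeplitz sequence with period structure $(p_t)$. I would then assemble three facts. First, because $z$ is a generalized Oxtoby sequence w.r.t.\ $(p_t)$, every interval $[kp_t,(k+1)p_t)$ is a piece of $z$ for \emph{all} $t$ and $k$ (observed just after the definition of the Oxtoby property). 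Second, $z$ has the Oxtoby property, so by Theorem \ref{op} applied to $\pi$ the sequence $\pi(z)$ has the Oxtoby property; hence there is $T_1$ such that for every $t\ge T_1$ and $k$ the interval $[kp_t,(k+1)p_t)$ is a piece of $\pi(z)$. Third, applying Theorem \ref{thm:DKL} to $\pi\colon\overline{\mathcal{O}(z)}\to\overline{\mathcal{O}(\pi(z))}$, which sends $z\mapsto\pi(z)$, there are $t_0$ and $\phi\in{\rm Sym}(\A^{p_{t_0}})$ with $\tilde\phi(z)=\pi(z)$.

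The only genuinely new but elementary step is to upgrade the third fact from level $p_{t_0}$ to every level $p_t$ with $t\ge t_0$: since $p_{t_0}\mid p_t$, every $p_t$-block is a concatenation of $p_t/p_{t_0}$ consecutive $p_{t_0}$-blocks aligned at multiples of $p_{t_0}$, so if $\psi_t\in{\rm Sym}(\A^{p_t})$ applies $\phi$ to each such subblock, then $\tilde\psi_t=\tilde\phi$ and thus $\tilde\psi_t(z)=\pi(z)$. Setting $T:=\max(t_0,T_1)$, for every $t\ge T$ the points $x:=z$ and $y:=\pi(z)$ satisfy both bullets of Lemma \ref{ANY DEF} at level $p_t$ — the piece condition by the first two facts, and $y=\tilde\psi_t(x)$ by the third — so $XF_tY$, as required. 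I expect the main (mild) obstacle to be bookkeeping rather than estimation: one must choose $z$ so that it is simultaneously a generalized Oxtoby sequence, making the piece property on the $X$-side automatic, and a point whose orbit closure is all of $X$, so that both Theorem \ref{op} and Theorem \ref{thm:DKL} apply to that same point; after that everything is a direct application of the cited lemmas.
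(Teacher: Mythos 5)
Your proof is correct and follows essentially the same route as the paper. Both directions agree with the paper's argument in substance: for $(1)\Rightarrow(2)$ the paper also picks a generalized Oxtoby sequence $z\in X$, applies Theorem~\ref{op} to get the Oxtoby property for $\pi(z)$, and then obtains $\phi\in{\rm Sym}(\A^{p_t})$ with $\tilde\phi(z)=\pi(z)$ for all large $t$; the only cosmetic difference is that the paper invokes Lemma~\ref{lem:53} directly (which already produces such a $\phi$ at every sufficiently large level, since $[-r,r]\subseteq\Per_{p_t}(z),\Per_{p_t}(\pi(z))$ eventually), whereas you invoke Theorem~\ref{thm:DKL} once and then manually upgrade $\phi\in{\rm Sym}(\A^{p_{t_0}})$ to $\psi_t\in{\rm Sym}(\A^{p_t})$ by applying $\phi$ blockwise — a correct and entirely routine substitute. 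For $(2)\Rightarrow(1)$ the paper observes that $\tilde\phi$ carries $W$ into $V\subseteq Y$ and, since $\tilde\phi(x)$ is a point of $Y$ whose orbit closure is $Y$ by minimality, concludes $X\cong Y$ via Theorem~\ref{thm:DKL}; your appeal to Lemma~\ref{lem:parts_Dp} is a slightly cleaner packaging of exactly that argument. No gap.
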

\begin{proof}
     (1)$\Rightarrow$(2) Let $z$ be a generalized Oxtoby sequence in $X$, let $f$ be an isomorphism from $X$ to $Y$. By Theorem \ref{op}, $f(z)$ has the Oxtoby property. In other words, there is $T_1\in \mathbb{N}$ such that for all $t\geq T_1$ and $k\in \mathbb{Z}$ the interval $[kp_t,(k+1)p_t)$ is a piece of $y$. By Lemma \ref{lem:53}, we can find $T_2\in \mathbb{N}$ such that for all $t\geq T_2$, there exists $\phi\in {\rm Sym}(\A^{p_{t}})$ such that $\tilde \phi(z)=y$. Take $T={\rm max}\{T_1,T_2\}$. 
     By Lemma \ref{ANY DEF}, we are done.

     (2)$\Leftarrow$(1) Fix any $t\geq T$. Let $W$ and $V$ and $\phi$ be in Definition \ref{ER} for $F_t$. Take any $x\in W$. We know that $\tilde\phi$ is an isomorphism sending $x\in W$ to $\tilde \phi (x)\in V$. Since $X$ and $Y$ are minimal, $\tilde\phi $ is an isomorphism from $X$ to $Y$.
\end{proof}
\begin{thm}
    The conjugacy relation of generalized Oxtoby systems with respect to $(p_t)$ is hyperfinite.
\end{thm}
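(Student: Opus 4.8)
The plan is to assemble the theorem directly from the relations $F_t$ of Definition \ref{ER} together with the characterization of conjugacy in Lemma \ref{End Game}. For each $T \in \mathbb{N}$ I would set
\[
    G_T := \bigcap_{t \ge T} F_t ,
\]
viewed as a relation on the (standard Borel) space of generalized Oxtoby subshifts with respect to $(p_t)$. By Lemma \ref{Borel EQ} each $F_t$ is Borel, so $G_T$, a countable intersection of Borel sets, is Borel. By Lemma \ref{EQ} each $F_t$ is an equivalence relation, hence so is the intersection $G_T$; and since $G_T \subseteq F_T$ with $F_T$ finite, every $G_T$-class is contained in an $F_T$-class and is therefore finite. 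Thus each $G_T$ is a finite Borel equivalence relation.

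Next I would observe that the family $(G_T)_{T \ge 1}$ is increasing: if $T \le T'$, the intersection defining $G_{T'}$ ranges over a subset of the indices appearing in $G_T$, so $G_T \subseteq G_{T'}$. Lemma \ref{End Game} states precisely that two generalized Oxtoby subshifts $X,Y$ are conjugate if and only if there is some $T$ with $X F_t Y$ for all $t \ge T$, that is, $(X,Y) \in G_T$. Hence the conjugacy relation equals $\bigcup_{T \ge 1} G_T$, an increasing union of finite Borel equivalence relations. In particular the conjugacy relation is Borel (a countable union of Borel sets) and countable (each class is an increasing union of finite sets), so it is a countable Borel equivalence relation; being an increasing union of finite Borel equivalence relations, it is hyperfinite by the definition recalled in Section \ref{2}.

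There is essentially no obstacle left at this point: the substantive work has already been carried out, namely that isomorphisms preserve the Oxtoby property (Theorem \ref{op}), that each $F_T$ is transitive and therefore a genuine finite equivalence relation (Lemma \ref{EQ}), and the two-sided characterization of conjugacy (Lemma \ref{End Game}). The only things needing (minor) care are that an equivalence subrelation of a finite equivalence relation is automatically finite, and that $\bigcup_T G_T$ is genuinely an \emph{increasing} union rather than merely a directed one; both are immediate from the definitions above.
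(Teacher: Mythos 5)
Your argument is correct and is exactly the paper's: the paper's proof consists of the single observation that, by Lemma \ref{End Game}, the conjugacy relation equals $\bigcup_{T\in\mathbb{N}}\bigcap_{t\geq T}F_t$, which is precisely your $\bigcup_T G_T$. You have merely spelled out the routine verifications (Borelness, finiteness of each $G_T$, and that the union is increasing) that the paper leaves implicit.
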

\begin{proof}
    By Lemma \ref{End Game}, we have the conjugacy relation of generalized Oxtoby systems with respect to $(p_t)$ is equal to $\bigcup_{T\in \mathbb{N}} \bigcap_{t\geq T}F_t$.
\end{proof}
\subsubsection*{Acknowledgements.}
Konrad Deka was supported by National Science Centre, Poland, 
grant Preludium Bis no. 2019/35/O/ST1/02266. Bo Peng is partly funded by NSERC Discovery Grant RGPIN-2020-05445.

\bibliographystyle{plain} 
\bibliography{bibliography} 

\end{document}